\newtheorem{Theorem}{Theorem}
\newtheorem{Lemma}{Lemma}
\newtheorem{Corollary}{Corollary}
\newtheorem{Remark}{Remark}
\newtheorem{Example}{Example}
\newcommand{\eq}{\mathop{\Longleftrightarrow}\limits}
\newcommand{\imp}{\mathop{\Longrightarrow}\limits}
\newcommand{\cal}{\mathcal}
\def \NN{\mathbb N}
\def \QQ{\mathbb Q}
\def \RR{\mathbb R}
\def \SS{\mathbb S}
\def \RV{\mathcal {RV}}
\def \Uc{\mathcal U(c_0(x))}
\def \Uq{\mathcal U(x^q)}
\def \Ic{\mathcal I(c_0(x))}
\def \Ixq{\mathcal I(x^q)}
\def \Iq{\mathcal I_{<q}}
\def \Ieq{\mathcal I_{\leq q}}
\def \I0{\mathcal I_{0}}
\def \Is{\mathcal I}
\def \Fin{\mathcal Fin}
\def \bs{\backslash}
\begin{document}
\today
\title {Characteristics of distributions of sets and their $(R)$- and $(N)$-denseness}

\author[P. Miska]{Piotr Miska}

\address{Institute of Mathematics 
	\\Faculty of Mathematics and Computer Science
	\\Jagiellonian University in Krak\'ow
	\\ul. {\L}ojasiewicza 6, 30-348 Krak\'ow
	\\Poland\\
}
\email{piotr.miska@uj.edu.pl}

\author[J. T. T\'oth]{J\'anos T. T\'oth}

\address{
	Department of Mathematics\\
	J. Selye University\\
	P. O. Box 54\\945 01 Kom\'arno\\
	Slovakia\\
}
\email{tothj@ujs.sk}

\begin{abstract} Let $0\leq q\leq1$ and $\NN$ denotes the set of all positive integers. In this paper we will deal with it too the family $\Uq$ of all regularly distributed set  $X \subset \NN$ whose ratio block sequence of type \eqref{1} is asymptotically distributed with distribution function $g(x) = x^q;\ x \in(0,1]$, and we will show that the regular distributed set, regular sequences, regular variation at infinity are equivalent notations. In this paper also we discuss the relation ship between notations as (N)-denseness, directions sets, generalized ratio sets, dispersion of sequence \eqref{1} and exponent of convergence.
\end{abstract}

\keywords{dense ratio sets, direction sets, (R)-denseness, (N)-denseness, dispersion, block sequences, distribution functions, regular sequence, regular variation at infinity, exponent of convergence}
\subjclass[2010]{40A05, 40A35, 11J71}
\maketitle

\parindent=0pt

\section*{Declarations}
The research of the first author is supported by the Grant of the Polish National Science Centre No. UMO-2019/34/E/ST1/00094. The research of the second author is supported by The Slovak Research and Development Agency under the grant VEGA No. 1/0776/21.

There are neither conflicts of interest nor competing interests.

Our manuscript has no associated data.

 \section{Introduction}
 
 In the whole paper we assume $X=\{x_1<x_2<\cdots <x_n<\cdots\}\subset\NN$ where $\NN$ denotes the set of all positive integers.
 
 The following sequence derived from $X$
 \begin{equation}  
 \frac{x_1}{x_1}, \frac{x_1}{x_2}, \frac{x_2}{x_2},
 \frac{x_1}{x_3}, \frac{x_2}{x_3}, \frac{x_3}{x_3}, \dots ,
 \frac{x_1}{x_n}, \frac{x_2}{x_n}, \dots , \frac{x_n}{x_n}, \dots
 \label{1}
 \end{equation}
 is called \emph{the ratio block sequence} of the set $X$. It is formed by the blocks $X_1, X_2, \dots , X_n , \dots $ where
 $$
 X_n = \left( \frac{x_1}{x_n}, \frac{x_2}{x_n}, \dots , \frac{x_n}{x_n}\right) ,\quad n = 1,2,\dots
 $$
 is called the $n$\emph{-th block}. This kind of block sequences was introduced by O.~Strauch and J.~T.~T\'oth
 \cite{ST} and they studied the set $G(X_n)$ of its distribution functions.
 \\Further, we will be interested in ratio block sequences of type~\eqref{1} possessing an asymptotic distribution function, i.e. $G(X_n)$ is a singleton (see definitions in the next section). 
 
 By means of these distribution functions in \cite{TBFM} there was defined some families of subsets of $\NN$. For $0\leq q\leq 1$ we denote by $\Uq$ the family of all regularly distributed set $X \subset \NN$ whose ratio block sequence is asymptotically distributed with distribution function $g(x) = x^q;\ x \in(0,1]$.
 Further in \cite{TBFM} the following interesting results can be seen, that the exponent of convergence $\lambda$ is closely related to distributional properties of sets of positive integers. More precisely, for each $q \in [0,1]$ the family $\Ieq$ of all sets $A \subset \NN$ such that $\lambda(A) \leq q$ is identical with the family $\Ixq$ of all sets $A \subset \NN$ which are covered by some regularly distributed set $X \in\Uq$.
 \\The \emph{exponent of convergence} of a set $A\subset\NN$ is defined by
 $$
 \lambda(A) = \inf \{ s \in (0,\infty):\ \sum\limits_{n \in \NN} a_n^{-s} < \infty \},
 $$     
 where $A = \{ a_1 < a_2 < \cdots \} \subset \NN$.
 \\In this paper  we will show that the regular distributed set, regular sequences, regular variation at infinity are equivalent notations and also we discuss the relationship between notations as (N)-denseness, directions sets, generalized ratio sets, dispersion of sequence \eqref{1} and exponent of convergence.
 \\The rest of our paper is organized as follows. In Section 2 and Section 3  we recall some known definitions, notations and theorems, which will be used and extended. In Section 4 our new results are presented. Section 5 summarizes the results in chains of implications.

 \section{Definitions}
The following basic definitions are from  papers \cite{PS, OS, ST, S1, S2, TFBZ, TBFM}.

\begin{itemize}\itemsep2pt
	\item[{\tiny$\bullet$}]  $1\le x_1<x_2<\cdots$ denotes a sequence of positive integers.
	
	\item[{\tiny$\bullet$}] For each $n\in \NN$ consider the \emph{step distribution function}
	\[
	F(X_n,x)=\frac{\#\{i\le n: \frac{x_i}{x_n}<x\}}n\,,
	\]
	for $x\in[0,1)$, and for $x=1$ we define $F(X_n,1)=1$.
	
	\item[{\tiny$\bullet$}] A non-decreasing function $g : [0, 1] \to [0, 1]$, $g(0) = 0$, $g(1) = 1$ is called a
	\emph{distribution function} (abbreviated d.f.). We shall identify any two d.f.s
	coinciding at common points of continuity.
	
	\item[{\tiny$\bullet$}] A d.f. $g(x)$ is a d.f. of the sequence of blocks $X_n$, $n = 1, 2, \ldots$, if there
	exists an increasing sequence $n_1 < n_2 < \ldots$ of positive integers such that
	\[
	\lim_{k\to\infty} F(X_{n_k},x)=g(x)
	\]
	a.e. on $[0, 1]$. This is equivalent to the weak convergence, i.e., the preceding
	limit holds for every point $x \in [0, 1]$ of continuity of $g(x)$.
	
	\item[{\tiny$\bullet$}] Denote by $G(X_n)$ the set of all d.f.s of $X_n$, $n = 1, 2, \ldots$. The set of  distribution functions of ratio block sequences was  studied in \cite{BMST2, BMST1, BFT1, BT1, BU, FMT1, GS, KN, LS, PS}.  
	\newline
	If $G(X_n) =\{g(x)\}$
	is a singleton, the d.f. $g(x)$ is also called the \emph{asymptotic distribution function} (abbreviated
	a.d.f.) of $X_n$.
	\newline
	Especially, if $G(X_n)=\{x\}$, then we say that the
	sequence of blocks $X_n$ is uniformly distributed (abbreviated as u.d.) in $[0,1]$.
	
	\smallskip

\item[{\tiny$\bullet$}] The function $\lambda:2^\NN\rightarrow[0,1]$ defined by	
$$
	\lambda(A)=\inf\Big\{t>0:\sum_{a\in A}\frac{1}{a^t}<\infty\Big\}
$$
is called the \emph{exponent of convergence} of a set $A\subset\NN$.
	 
If $q>\lambda(A)$, then $\sum_{a\in A}\frac{1}{a^q}<\infty$ and if  $q<\lambda(A)$, then $\sum_{a\in A}\frac{1}{a^q}=\infty$. In the case when  $q=\lambda(A)$, the series $\sum_{a\in A}\frac{1}{a^q}$ can be either convergent or divergent. 
\\From (\cite[p.26, Exercises 113, 114]{PS}) it follows that the set of all possible values of $\lambda$ forms the whole interval $[0,1]$, i.e. $\{\lambda(A):A\subset\NN\}=[0,1]$ and if $A=\{a_1<a_2<\cdots<a_n<\cdots\}$ then $\lambda(A)$ can be calculated by
$$
\lambda(A)=\limsup_{n\to\infty}\frac{\log n}{\log a_n}.
$$
Here and in the whole paper we use $\log$ for the natural logarithm.\\
Evidently the exponent of convergence $\lambda$ is a monotone set function, i.e. $\lambda(A)\leq\lambda(B)$ for $A\subset B\subset\NN$ and also $\lambda(A\cup B)=\max\{\lambda(A),\lambda(B)\}$ holds for all $A,B\subset\NN$. 

\item[{\tiny$\bullet$}] 

For every $f:\NN\to [0,\infty)$ such that $\sum_{n=1}^\infty f(n)=\infty$ we define a summable ideal generated by the function $f$ by
\[
\Is_f=\Big\{B\subset\NN:\ \sum_{n\in B} f(n)<\infty \Big\} .
\]
In particular, if $f(n)=1/n^q$ with $0\leq q\leq 1$ we obtain the ideal 
\[
\Is_{1/n^q}=\Big\{B\subset\NN:\ \sum_{n\in B} \frac{1}{n^q}<\infty \Big\},
\]
where $\Is_{1/n^0}=\Fin=\{A\subset\NN:\ A \ \textrm{is finite} \}$.

By means of $\lambda$ we define the following ideals (see \cite{TFBZ}):
\\$\Iq=\{A\subset\NN:\lambda(A)<q\} $ for $0<q\leq1$,
\\$\Ieq=\{A\subset\NN:\lambda(A)\leq q\} $ for $0\leq q\leq1$ and
\\$\I0=\{A\subset\NN:\lambda(A)=0\} $.
\\Obviously $\mathcal I_{\leq 0}=\I0$ and $\mathcal I_{\leq 1}=2^\NN$. Moreover, we introduce the following families of subsets of $\NN$ (see \cite{TBFM}):
	$$\Uc = \{X\subset\NN:G(X_n)=\{c_0 (x)\}\}, $$  
	$$\Ic=\{A\subset \NN : \exists X\in\Uc, A\subset X\}, $$
	\\and for $0<q\leq 1$
	$$\Uq =\{X\subset\NN:G(X_n)=\{x^q\}\}, $$
	$$\Ixq= \{A\subset \NN : \exists X\in\Uq, A\subset X\}, $$
	where
	\[
	c_0(x) = \left.
	\begin{cases}
	0 & \text{if } x = 0, \\
	1 & \text{if } 0<x\leq 1.
	\end{cases}\right.
	\]
	Obviously, $$\Uc\subsetneq\Ic ,\quad \Uq\subsetneq\Ixq .$$ 

For a finite set $A\subset\NN$ we have $\lambda(A)=0$. Consequently, $\Fin\subset\I0$. The families $\Iq, \Ieq$ and the well known family $\Is_{1/n^q}$ are related for $0<q<q'<1$ by following inclusions (see \cite{TFBZ}, Th.1.)
\begin{equation}\label{2}
\Fin\subsetneq\I0\subsetneq\Iq\subsetneq\Is_{1/n^q}\subsetneq \Ieq\subsetneq \mathcal I_{<q'}\subsetneq\mathcal I_{<1},
\end{equation}
and the difference of successive sets is infinite, so equality does not hold in any of the inclusions.
\\In this paper we deal with such sets Y which are from some family $\Uq$ for $0<q<1$, so the asymptotic density of the set Y is zero ($d(Y)=0$), since $$\Uq\subsetneq\Ixq=\Ieq\subset\Is_{<1}\subset\Is_{1/n^1}\subset\Is_d=\{A\subset\NN: d(A)=0\}\ .$$ 
The standard way to evaluate the size of sets of real numbers is to take their Lebesgue measure. In this theory sets of Lebesgue measure zero are the smallest ones and often considered as negligible. Sometimes it is important to distinguish also among these sets. To do so, the best way is to use the Hausdorff dimension.
The most frequently used characteristics to evaluate the size of sets of positive integers is the (upper or lower) asymptotic density. Here again, sets of asymptotic density zero are sometimes considered as negligible. In this case, to distinguish among them, the best way is to use the exponent of convergence, or equivalently, the exponential density.

\item[{\tiny$\bullet$}] 
Let $\mathcal I\subset 2^\NN$. Then $\mathcal I$ is called an \emph{admissible ideal} of subsets of positive integers, if  $\mathcal I$ is additive (if $A,B\in\mathcal I$ then $A\cup B\in\mathcal I $), hereditary (if $A\in\mathcal I$ and $B\subset A$ then $B\in\mathcal I$), containing all finite subsets of $\NN$ and it does not contain $\NN $.

\item[{\tiny$\bullet$}] Suppose $f(x)$ is eventually defined on $(1,\infty)$ and eventually positive. Given $\alpha\in \RR$, $f(x)$ has regular variation at infinity of index $\alpha$, abbreviated as $f(x)\in\RV_{\alpha}$, if 
	$$\lim_{x\to\infty}\frac{f(cx)}{f(x)}=c^{\alpha},\ \textrm{ for every}\ c>0 .$$
(see \cite{BU}, p.\,136).	If $f(x)\in \RV_0$ then $f(x)$ is also said to be slowly varying at infinity.

\item[{\tiny$\bullet$}]
A set $A=\{a_1<a_2<\cdots<a_n<\cdots\}\subset\NN$ satisfying
$$A(x)\sim x^q\phi(x),$$
where fuction $\phi(x)\in\RV_0$ and $0<q\leq 1$,
will be called a regular sequence with exponent $q$ (see \cite{PS}, Exerc.\,153, p.\,86).

\item[{\tiny$\bullet$}] The concept of a ratio set has been introduced in the papers \cite{S1}, \cite{S2}. If $A\subset\NN$, $B\subset\NN$, then we put $R(A,B)=\{\frac ab;\ a\in A,\  b\in B\}$. The set $R(A,B)$ is said to be the ratio set of $A$ and $B$. In particular, for $A=B$ we put $R(A,A)=R(A)$. The symbol $X^d$ will stand for the set of all accumulation points of $X\subset [0,\infty]$. It is easy to see that for any infinite subsets $A,\ B$ of positive integers $\{0,+\infty\}\subset R(A,B)^d$. The set $R(A,B)$ is dense in $(0,\infty)$ if $R(A,B)^d=[0,+\infty]$. Note here that $R(A,B)\neq R(B,A)$ in general, however $R(A,B)$ is dense in $(0,\infty)$ if and only if $R(B,A)$ is dense in $(0,\infty)$.
\\We say that the set $A\subset\NN$ is $(N)$-dense set if $R(A,B)$ is dense set in $(0,\infty)$ for arbitrary infinite set $B\subset\NN$ and A is $(R)$-dense set if $R(A)$ is dense set in $(0,\infty)$. Therefore $(N)$-denseness implies $(R)$-denseness.

\item[{\tiny$\bullet$}] Let us denote
\begin{align*}
\RR_+=&(0,\infty),\\
\SS^k=&\{(x_1,\ldots,x_k,x_{k+1})\in\RR^{k+1}: \sum_{i=1}^{k+1} x_i^2=1\},\\
\SS_+^k=&\SS^k\cap\RR_+^{k+1}
\end{align*}
for each $k\in\NN$. Then, for each $k\in\NN$, $k\geq 2$, we define $k-1$-dimensional ratio set of given sets $A_1,\ldots,A_{k-1},B\subset\NN$ as
$$R^k(A_1,\ldots,A_{k-1};B)=\left\{\left(\frac{a_1}{b},\ldots,\frac{a_{k-1}}{b}\right): a_1\in A_1,\ldots,a_{k-1}\in A_{k-1},b\in B\right\}.$$
If $A_1=\ldots =A_{k-1}=A$, then we will write $R^k(A;B)$ instead of $R^k(A,\ldots ,A;B)$. Similarly, if $A_1=\ldots =A_{k-1}=B=A$, then we shall write $R^k(A)$ instead of $R^k(A;A)$. Of course, $R^2(A;B)=R(A,B)$ is the well known ratio set of $A$ and $B$. In particular, $R^2(A)=R(A)$. Moreover, let us introduce for each $k\in\NN$, $k\geq 2$, the $k$-th directions set of sets $A_1,\ldots,A_{k-1},B\subset\NN$ as 
$$D^k(A_1,\ldots,A_{k-1};B)=\left\{\left(\frac{a_1}{\sqrt{\sum_{i=1}^k a_i^2}},\ldots,\frac{a_k}{\sqrt{\sum_{i=1}^k a_i^2}}\right): a_1\in A_1,\ldots,a_{k-1}\in A_{k-1}, a_k\in B\right\}.$$
If $A_1=\ldots =A_{k-1}=A$, then we will write $D^k(A;B)$ instead of $D^k(A,\ldots ,A;B)$. Similarly, if $A_1=\ldots =A_{k-1}=B=A$, then we shall write $D^k(A)$ instead of $D^k(A;A)$.
The concept of directions sets $D^k(A)$ as generalizations of ratio sets was introduced and studied by P. Leonetti and C. Sanna in \cite{LS}.

\item[{\tiny$\bullet$}] Let us define dispersion of an infinite set $A=\{a_1<a_2<a_3<\ldots\}\subset\NN$ as 
$$\underline D (A)=\liminf_{n\to\infty}\frac{1}{a_n}\max\{a_1,a_{i+1}-a_i:i\in\{1,\ldots,n-1\}\}.$$
Let us notice that dispersion is an non-increasing function on the set $2^{\NN}\setminus\Fin$ of all infinite subsets of $\NN$, i.e. if $A\subset B\subset\NN$ are infinite, then $\underline{D}(A)\geq\underline{D}(B)$. Moreover, $\underline{D}(\NN)=0$ and every value from the interval $[0,1]$ is attained as dispersion of some subset of $\NN$ (see \cite{TMF}).
\end{itemize}

\section{Overview of the known results}

In this section we mention well known results related to the topic of this paper and some other ones we use in the proofs of our theorems. In the whole section in (A1)--(A8) we assume $X=\{x_1<x_2<\cdots <x_n<\cdots\}\subset\NN$.

\medskip

\begin{itemize}\itemsep2pt

	\item[(A1)] Assume that $G(X_n)$ is singleton, i.e., $G(X_n) =\{g(x)\}$. Then either $g(x) =c_0(x)$ for $x \in [0, 1]$; or $g(x) = x^q$ for $x \in [0, 1]$ and some fixed $0 < q \le 1$ \cite[Th. 8.2]{ST}.
\end{itemize}

	The sets $X=\{x_1<x_2<\cdots\}$ from $\Uc$ are characterized by (A4) and these ones belonging to $\Uq$ are characterized by (A2) and (A6). In (\cite[Theorem 1 and Example 1]{TBFM}) there is proved that the family $\Uc$ is additive, i.e. it is closed with respect to finite unions and does not form an ideal as it is not hereditary, i.e. there exists sets $C\in\Uc$ and $B\subset C$ such that $B\notin \Uc$. On the other hand the family $\Ic$ is an ideal (\cite{TBFM}, Theorem 2). For these families the following statements hold.
	
\begin{itemize}\itemsep2pt	
	\item[(A2)] 	Let $0<q\le 1$ be a real number. Then 
	$$X\in\Uq\ \Longleftrightarrow\ \forall\ k\in\NN:\  \lim_{n\to\infty}\frac{x_{kn}}{x_n}=k^{\frac{1}{q}}\ .$$
	\cite[Th. 1]{FT}

	\item[(A3)] 	Let $0<q\le 1$ be a real number. If $X\in\Uq$, then
	$$\lim_{n\to\infty}\frac{x_{n+1}}{x_n}=1.$$
	\cite[Remark 3]{FMT1}

	\item[(A4)] We have
	$$X\in\Uc\ \Longleftrightarrow \ \lim_{n\to\infty} \frac1{nx_n}\sum_{i=1}^n x_i =0.$$
	\cite[Th. 7.1]{ST}

	\item[(A5)] We have
	$$c_0(x) \in  G(X_n)  \Longleftrightarrow
	\liminf_{n\to\infty} \frac1{nx_n}\sum_{i=1}^n x_i =0.$$
	\cite[Th. 4]{FMT1}
	
	\item[(A6)]
	Let $0<q\le 1$ be a real number. Then 
	$$X\in\Uq\ \Longleftrightarrow\ \lim_{n\to\infty} \frac1{nx_n} \sum_{i=1}^{n} {x_i}=\frac q{q+1}.$$
	\cite[Th. 1]{BFT1}
	
	\item[(A7)] 
	If $X\in\Uc$. Then
	$$\lim_{n\to\infty}\frac{\log n}{\log x_n}=0\ (\textrm{i.e.}\ \lambda(X)=0). $$
	\cite[Th. 2]{BFT1}
    
    \item[(A8)] 	Let  $0<q\leq 1$ be a real number. If $X\in\Uq$ then
    $$ \lim_{n\to\infty}\frac{\log n}{\log x_n}=q\ (\textrm{therefore}\ \lambda(X)=q). $$
    \cite[Th. 3]{BFT1}
    
    \item[(A9)] 	Let $0<q\leq 1$. Then each of the families $\I0,\  \Iq$ and $\Ieq$ forms an admissible ideal, except for $\mathcal I_{\leq 1}$ \cite[Th. 1]{TFBZ}.
    
    \item[(A10)] Let $0<q\leq 1$. Then each of the families $\Ic,\  \Ixq$ forms an admissible ideal and $\Ic=\I0$, $\Ixq=\Ieq$ \cite[Th. 5 and Th. 7]{TBFM}.
\end{itemize}

In (A10) there are characterized sets $A\subset\NN$ belonging to ideals $\Ic$ or $\Ixq$ by means of their exponent of convergence, i.e. $\lambda(A)=0$ or $\lambda(A)\leq q$, which means that $A\in\I0$ or $A\in \Ieq$. From (A8) and (A10) we obtain also the following interesting inclusion holding for studied  families of sets (for characterization of $\Ieq$ and $\Iq$ see (A11) and (A12)):
    $$\Uq\subset\Ieq\setminus\Iq ,\ \textrm{which implies }\ \Iq\subset\Ixq\setminus\Uq .$$

\begin{itemize}\itemsep2pt	    
    \item[(A11)] Let $0\leq q<1$ be real and define the counting function of $A\subset \NN$ as
    $A(x)=\#\{a\leq x: a\in A \}$ for $x\geq 1$. Then
    $$A\in\Ieq\ \Longleftrightarrow\ \forall\ \delta>0:\ \lim_{x\to\infty}\frac{A(x)}{x^{q+\delta}}= 0\ .$$
    \cite[Th. 3]{TFBZ}
    
    \item[(A12)] Let $0<q\leq 1$ be a real number and $A\subset\NN$. Then
    $$A\in\Iq\ \Longleftrightarrow\ \exists\ \delta>0:\  
    \lim_{x\to\infty}\frac{A(x)}{x^{q-\delta}}= 0 .$$
    \cite[Th. 4]{TFBZ}

    \item[(A13)]
    Let  $0<q\leq1$, $ X=\{x_1<x_2<\cdots\}\subset\NN$, $Y=\{y_1<y_2<\cdots\}\subset\NN$, let $g(x)\in\{c_0(x),x^q\}$ be fixed and assume that 
    \[ 
    Y\in\mathcal U(g(x)) \quad\textrm{and } \quad\lim_{t\to\infty}\frac{X(t)}{Y(t)}=0.
    \]
    Then
    $$
    X\cup Y\in\mathcal U(g(x)).
    $$
    \cite[Th. 4]{TBFM}

   \item[(A14)] Let $f(x)$ and $g(x)$ are eventually defined on $\RR$, and eventually positive. Then, for $\alpha, \beta\in\RR$,
   \begin{itemize}
   	\item[i)] 
   	$$f(x)\in\RV_\alpha\ \Longleftrightarrow\ \frac{f(x)}{x^\alpha}\in\RV_0\ .$$
   	(\cite{BU}, Prop. 7. 20, p. 136))
   	\item[ii)] 
   	(a) If $f(x)\sim g(x)\in\RV_\alpha$ then $f(x)\in\RV_\alpha$,
   	\\(b) If $f(x)\in\RV_\alpha$ and $g(x)\in\RV_\beta$ then the reciprocal $\frac{1}{f(x)}\in\RV_{-\alpha}$ , and the product $f(x)g(x)\in\RV_{\alpha+\beta}$,
   	\\(c) $\RV_0$ is closed under multiplication, division, and asymptotic equality.
   	\\(\cite{BU}, Prop. 7. 21, p. 137)
   \end{itemize}  
    
    \item[(A15)] Let $A=\{a_1<a_2<\cdots<a_n<\cdots\}\subset\NN$. Then
    \[
    \lambda(A)=\limsup_{n\to\infty}\frac{\log n}{\log a_n}=\limsup_{x\to\infty}\frac{\log A(x)}{\log x} 
    \]
    and
    \[
    \liminf_{n\to\infty}\frac{\log n}{\log a_n}=\liminf_{x\to\infty}\frac{\log A(x)}{\log x} .
    \]
    \cite[Ex. 148, 149, p. 85]{PS}

    \item[(A16)] Let $A=\{a_1<a_2<\cdots<a_n<\cdots\}\subset\NN$. If $A$ is regular sequence with exponent $q>0$, then $\lambda(A)=q$\ \cite[Ex. 153, p. 86]{PS}.

    \item[(A17)] Let $A=\{a_1<a_2<\cdots<a_n<\cdots\}\subset\NN$. If $A$ is regular sequence with exponent $q>0$, then $A(x)\in\RV_q$ i.e. $\lim_{x\to\infty}\frac{A(cx)}{A(x)}=c^q$ for every $c>0$ \cite[Ex. 154, p. 86]{PS}.

    \item[(A18)] Let $A=\{a_1<a_2<\cdots<a_n<\cdots\}\subset\NN$ be a regular sequence with exponent $q>0$ and let $\alpha>0$. Then
    \[
    \lim_{t\to\infty}\frac{1}{A(t)}\sum_{a_i\leq t}\Big(\frac{a_i}{t}\Big)^{\alpha-q}=\int_{0}^{1} x^{\frac{\alpha-q}{q}}dx=\frac{q}{\alpha}\ .
    \]
    Therefore, as $\alpha=q+1$ we have \[\lim_{n\to\infty} \frac1{na_n} \sum_{i=1}^{n} {a_i}=\frac q{q+1}\ .\]
    \cite[Ex. 157, p. 86]{PS}

    \item[(A19)] Let $\alpha\in[0,\infty)$. Suppose $f(x)\in\RV_{\alpha}$ is eventually nondecreasing and diverges (to infinity). Then
    \[
    \alpha=\lim_{n \to \infty}\frac{\log f(n)}{\log n}\ .
    \]
    \cite[Prop. 7.23., p. 138]{BU}

    \item[(A20)] Let $A\subset\NN$ be a set such that
    \[
    \liminf_{t\to\infty}\frac{A(ct)}{A(t)}>1\ \textrm{for every}\ c>1\ ,
    \]
    then $A$ is $(N)$-dense set. \cite[Satz 3]{S2}

    \item[(A21)] If a set $A=\{a_1<a_2<\cdots<a_n<\cdots\}\subset\NN$ satisfies
    \[
    \lim_{n \to \infty}\frac{a_{n+1}}{a_n}=1\ ,
    \]
    then $A$ is $(N)$-dense set. \cite[Th. 2]{BT1}

    \item[(A22)] Let $A=\{a_1<a_2<\cdots<a_n<\cdots\}\subset\NN$. Then
    \[
    \lim_{n \to \infty}\frac{a_{n+1}}{a_n}=1\ \imp\ \underline D(A)=0\ \imp\ A\ \textrm{is}\ (R)-\textrm{dense set}\ \imp\ \underline D(A)\leq\frac 12\ .
    \]
    \cite[Th. 2, Th. 3]{TMF}
    
    \item[(A23)] Let $A=\{a_1<a_2<\cdots<a_n<\cdots\}\subset\NN$ and $k,l\in\NN$, $2\leq k<l$. Then
    \[
    D^l(A)\ \textrm{is dense in}\ \SS^{l-1}_+\ \imp\ D^k(A)\ \textrm{is dense in}\ \SS^{k-1}_+.\
    \]
    Moreover, there exists a set $A\subset\NN$ such that $D^k(A)$ is dense in $\RR^k_+$ but $D^l(A)$ is not dense in $\RR^l_+$ \cite[Th. 1.4]{LS}.
    
    \item[(A24)] Let $A=\{a_1<a_2<\cdots<a_n<\cdots\}\subset\NN$. Then
    \[
    \lim_{n \to \infty}\frac{a_{n+1}}{a_n}=1\ \imp\ \forall\ k\geq 2:\ D^k(A)\ \textrm{is dense in}\ \SS^{k-1}_+.\
    \]
    \cite[Th. 1.5]{LS}
\end{itemize}

\section{Results}

\begin{Lemma}\label{L1}
Let $A=\{a_1<a_2<\cdots\}\subset\NN$. Then, for every $c>0$ we have
\begin{equation}
	\label{3} 
	\limsup_{t\to\infty}\frac{A(ct)}{A(t)}=\limsup_{n\to\infty}\frac{A(a_n)}{A\big(\frac 1c a_n\big)}\ , 
\end{equation}
\begin{equation}
	\label{4} 
	\liminf_{t\to\infty}\frac{A(ct)}{A(t)}=\liminf_{n\to\infty}\frac{A(a_n)}{A\big(\frac 1c a_n\big)}\ .
\end{equation}
\end{Lemma}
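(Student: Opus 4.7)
My plan is to exploit the step-function structure of $A(x)$. Since $A$ jumps only at points of $A$, the ratio $\frac{A(ct)}{A(t)}$ changes only when $t$ crosses some $a_m$ (change in denominator) or when $ct$ crosses some $a_m$, i.e.\ $t=a_m/c$ (change in numerator). On each interval of the form $[a_n/c, a_{n+1}/c)$ the numerator $A(ct)$ is constant and equal to $n=A(a_n)$, while $A(t)$ is non-decreasing, so the ratio is maximised at the left endpoint $t=a_n/c$ and minimised near the right endpoint. This suggests that the special subsequence $t_n=a_n/c$ already carries all the asymptotic extremes.

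For \eqref{3} I would prove both inequalities. The easy direction ($\geq$) is immediate: substituting $t=a_n/c$ gives $ct=a_n$, so $\frac{A(ct)}{A(t)}=\frac{A(a_n)}{A(a_n/c)}$, and the $\limsup$ over $t\to\infty$ dominates the $\limsup$ along any subsequence $a_n/c\to\infty$. For the reverse ($\leq$), given arbitrary $t$, set $n:=A(ct)$, so that $a_n\leq ct<a_{n+1}$, hence $a_n/c\leq t$; monotonicity of $A$ gives $A(t)\geq A(a_n/c)$, and therefore
\[
\frac{A(ct)}{A(t)}=\frac{n}{A(t)}\leq\frac{n}{A(a_n/c)}=\frac{A(a_n)}{A(a_n/c)}.
\]
As $t\to\infty$ one has $n=A(ct)\to\infty$, so passing to $\limsup$ yields the bound.

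For \eqref{4} the easy direction is now the upper bound, obtained again by restricting to $t=a_n/c$. For the lower bound, I would pick $n:=A(ct)+1$ for each $t$, so that $ct<a_n$ and hence $t<a_n/c$, giving $A(t)\leq A(a_n/c)$. This yields
\[
\frac{A(ct)}{A(t)}=\frac{n-1}{A(t)}\geq\frac{A(a_n)-1}{A(a_n/c)}=\frac{A(a_n)}{A(a_n/c)}-\frac{1}{A(a_n/c)}.
\]
Since $A$ is the counting function of an infinite set and $a_n/c\to\infty$, we have $A(a_n/c)\to\infty$, so the perturbation term vanishes and passing to $\liminf$ gives $\liminf_t\frac{A(ct)}{A(t)}\geq\liminf_n\frac{A(a_n)}{A(a_n/c)}$.

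The only subtle point will be bookkeeping for the off-by-one in the $\liminf$ case and the observation that the $-1/A(a_n/c)$ correction is asymptotically negligible; there are no deeper obstacles, since the whole argument reduces to the piecewise-constant structure of $A$ and the monotone substitution $t\mapsto a_n/c$.
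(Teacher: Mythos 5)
Your proof is correct and follows essentially the same route as the paper: both arguments sandwich an arbitrary $t$ between consecutive points of the form $a_n/c$ and compare $\frac{A(ct)}{A(t)}$ with $\frac{A(a_n)}{A(a_n/c)}$, the easy inequalities coming from restricting to $t=a_n/c$. The only cosmetic difference is in the bookkeeping of the discrepancy — the paper pays a multiplicative factor $\frac{n+1}{n}$ and runs an $\varepsilon$-argument, while you choose the index via $n=A(ct)$ (resp.\ $A(ct)+1$) and get an exact bound for the $\limsup$ and an additive error $\frac{1}{A(a_n/c)}\to 0$ for the $\liminf$, which is arguably slightly cleaner.
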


\begin{proof}
	Let $c>0$ be a fixed and denote $l=\limsup_{n\to\infty}\frac{A(a_n)}{A\big(\frac 1c a_n\big)}$. Then, for $\varepsilon>0$	there exists an $n_0\in\NN$ such that for each $n\geq n_0$
	$$\frac{A(a_n)}{A\big(\frac 1c a_n\big)}<l+\varepsilon .$$
	For sufficiently large $t$ there exists $n\geq n_0$ such that $\frac 1c a_n<t\leq \frac 1c a_{n+1}$. Therefore
	\[
	\frac{A(ct)}{A(t)}\leq\frac{A(a_{n+1})}{A\big(\frac 1c a_n\big)}=\frac{A(a_{n+1})}{A(a_n)}\frac{A(a_{n})}{A\big(\frac 1c a_n\big)}<\frac{n+1}{n}(l+\varepsilon) .
	\]
	From this we get (since $n\to\infty$ if $t\to\infty$) that
	$\limsup_{t\to\infty}\frac{A(ct)}{A(t)}\leq l+\varepsilon$ for arbitrary $\varepsilon>0$. Thus 
	$$\limsup_{t\to\infty}\frac{A(ct)}{A(t)}\leq l .$$
	Since on the other hand we obtain (by putting $t=\frac{1}{c}a_n$) that
	$$\limsup_{t\to\infty}\frac{A(ct)}{A(t)}\geq l ,$$
	then there holds equality ~\eqref{3}. The proof of ~\eqref{4} is similar.
\end{proof}

\begin{Theorem}\label{T1}
Let $A=\{a_1<a_2<\cdots\}\subset\NN$ and $g: [0, 1] \to [0, 1]$ be a distribution function of the sequence of blocks $A_n$. Then:
\begin{itemize}
\item[i)] $\liminf\limits_{t\to\infty}\frac{A(ct)}{A(t)}\leq\frac{1}{g\big(\frac 1c\big)}\leq \limsup\limits_{t\to\infty}\frac{A(ct)}{A(t)}$ for each $c>1$ such that $\frac 1c$ is a point of continuity of $g$,
\item[ii)] $\forall\  0<c<1:\ \liminf\limits_{t\to\infty}\frac{A(ct)}{A(t)}\leq g(c)\leq\limsup\limits_{t\to\infty}\frac{A(ct)}{A(t)}$ for each $c\in (0,1)$ being a point of continuity of $g$.
\end{itemize}
Moreover, the conditions i) and ii) are equivalent.
\end{Theorem}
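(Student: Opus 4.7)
The plan is to establish part ii) directly from the definition of $g$ as a distribution function of the block sequence $(A_n)$, and then derive part i) (together with the equivalence i)$\Leftrightarrow$ii)) by the substitution $t\mapsto ct$ and passage to reciprocals.

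For ii), fix $c\in(0,1)$ which is a continuity point of $g$. Because $g\in G(A_n)$, there is an increasing sequence $(n_k)$ of positive integers with $F(A_{n_k},c)\to g(c)$. The key observation is that
$$F(A_n,c)=\frac{\#\{i\le n:a_i<ca_n\}}{n}\quad\text{and}\quad \frac{A(ca_n)}{A(a_n)}=\frac{A(ca_n)}{n}$$
differ by at most $1/n$: they disagree only when $ca_n\in A$, and then by exactly $1$. Hence $A(c\,a_{n_k})/A(a_{n_k})\to g(c)$. Since $(a_{n_k})$ is a particular sequence of reals tending to infinity, the liminf (resp.\ limsup) along it dominates (resp.\ is dominated by) the corresponding one over all $t$, giving
$$\liminf_{t\to\infty}\frac{A(ct)}{A(t)}\le\liminf_{k\to\infty}\frac{A(c\,a_{n_k})}{A(a_{n_k})}=g(c)=\limsup_{k\to\infty}\frac{A(c\,a_{n_k})}{A(a_{n_k})}\le\limsup_{t\to\infty}\frac{A(ct)}{A(t)},$$
which is ii).

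For the equivalence (and hence for i)), fix $c>1$ and set $c':=1/c\in(0,1)$; by hypothesis $c'$ is a continuity point of $g$. Under the substitution $s=t/c$ one has $A(c't)/A(t)=A(s)/A(cs)$, so using the identities $\liminf(1/X_s)=1/\limsup X_s$ and $\limsup(1/X_s)=1/\liminf X_s$ (with the standard conventions $1/0=\infty$ and $1/\infty=0$) the chain
$$\liminf_{t\to\infty}\frac{A(c't)}{A(t)}\le g(c')\le\limsup_{t\to\infty}\frac{A(c't)}{A(t)}$$
is equivalent to
$$\liminf_{s\to\infty}\frac{A(cs)}{A(s)}\le\frac{1}{g(1/c)}\le\limsup_{s\to\infty}\frac{A(cs)}{A(s)}.$$
The first display is ii) at $c'$; the second is i) at $c$. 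This simultaneously yields i) from ii) and the two-way implication. Alternatively, Lemma~\ref{L1} combined with the already-established convergence $A(c'a_{n_k})/A(a_{n_k})\to g(c')$ gives i) directly, via $\limsup_t A(ct)/A(t)=\limsup_n A(a_n)/A(c'a_n)$ and its liminf counterpart.

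The main technical annoyance will be bookkeeping in the degenerate cases $g(1/c)=0$ and $\limsup_t A(ct)/A(t)=\infty$, where the reciprocal manipulation must be justified through the $0/\infty$ conventions; these cases arise naturally (for instance when $g=c_0$) but are harmless once each of the two inequalities in the target statements is inspected individually rather than obtained by taking reciprocals blindly.
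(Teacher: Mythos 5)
Your proof is correct and follows essentially the same route as the paper's: the key identity $F(A_{n_k},x)=A(xa_{n_k})/A(a_{n_k})+O(1/n_k)$ and the reciprocal-plus-substitution argument linking i) and ii) are exactly the paper's ingredients. The only difference is cosmetic: you prove ii) directly and deduce i), whereas the paper proves i) first (routing through its Lemma~\ref{L1} to pass between the subsequence $t=\frac{1}{c}a_n$ and all real $t$, of which only the trivial half is actually needed) and then deduces ii).
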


\begin{proof}
Let us start with the proof of i). Let $g(x)\in G(A_n)$ and $c>1$ be its point of continuity. Thus, there exists a sequence $(n_k)$ of positive integers such that  
$$g\Big(\frac 1c\Big)=\lim_{k \to \infty}F\Big(A_{n_k}, \frac 1c \Big)=\lim_{k\to\infty}\frac{\#\big\{i\le n_k: \frac{a_i}{a_{n_k}}<\frac 1c\big\}}{n_k}=\lim_{k \to \infty}\frac{A\big(\frac 1c a_{n_k}\big)}{A(a_{n_k})} . $$
Then 
\[
\liminf_{n \to \infty}\frac{A\big(\frac 1c a_{n}\big)}{A(a_{n})}\leq g\Big(\frac 1c\Big)\leq\limsup_{n \to \infty}\frac{A\big(\frac 1c a_{n}\big)}{A(a_{n})}\ \textrm{i.e.}\ \liminf_{n \to \infty}\frac{A( a_{n})}{A\Big(\frac 1c a_{n}\Big)}\leq \frac{1}{g\big(\frac 1c\big)}\leq\limsup_{n \to \infty}\frac{A(a_{n})}{A\Big(\frac 1c a_{n}\Big)}\ .
\]
From above and Lemma~\ref{L1} we have
$$\liminf_{t\to\infty}\frac{A(ct)}{A(t)}\leq\frac{1}{g\big(\frac 1c\big)}\leq\limsup_{t\to\infty}\frac{A(ct)}{A(t)} . $$

The proof of implication i)$\Rightarrow$ ii) is following. By i) we have
\[
 \liminf_{t\to\infty}\frac{A\big(\frac 1c t\big)}{A(t)}\leq \frac{1}{g(c)}\leq\limsup_{t\to\infty}\frac{A\big(\frac 1c t\big)}{A(t)}\
\]
for each $c\in (0,1)$. Thus
\[
 \liminf_{t\to\infty}\frac{A(t)}{A\big(\frac 1c t\big)}\leq g(c)\leq\limsup_{t\to\infty}\frac{A(t)}{A\big(\frac 1c t\big)},\
\]
and substituting $ct$ in the place of $t$ we get
\[
\liminf_{t\to\infty}\frac{A(ct)}{A(t)}\leq g(c)\leq \limsup_{t\to\infty}\frac{A(ct)}{A(t)}\ ,
\]
which was to show. The proof of implication ii)$\Rightarrow$ i) is analogous, thus we omit it.

\end{proof}

\begin{Remark}
{\rm Let us notice that if a distribution function $g:[0,1]\to [0,1]$ fulfils the conditions i) and ii) in the above theorem, then it need not to be a distribution function of the sequence of blocks $A_n$. 

Let $A=\NN\cap\bigcup_{k=1}^{\infty} ((2k)!,(2k+1)!]$. Then for each $c\in (0,1)$ we have $\liminf_{t\to\infty}\frac{A(ct)}{A(t)}=\lim_{k\to\infty}\frac{A((2k)!)}{A\left(\frac{1}{c}(2k)!\right)}=0$ and $\limsup_{t\to\infty}\frac{A(ct)}{A(t)}=\lim_{k\to\infty}\frac{A((2k+1)!)}{A\left(\frac{1}{c}(2k+1)!\right)}=1$. Hence, the function $g(x)=\min\{2x,1\}$ satisfies $\liminf_{t\to\infty}\frac{A(ct)}{A(t)}\leq g(c)\leq\limsup_{t\to\infty}\frac{A(ct)}{A(t)}$ for $c\in (0,1)$. Assume by contrary that there is a subsequence $(n_j)$ such that $g(x)=\lim_{j\to\infty}F(A_{n_j},x)$ for each $x\in [0,1]$. In particular, $\lim_{j\to\infty}F\left(A_{n_j},\frac{1}{4}\right)=\frac{1}{2}$ and $\lim_{j\to\infty}F\left(A_{n_j},\frac{1}{2}\right)=1$. Let $a_{n_j}\in ((2k)!,(2k+1)!]$, where $k$ is sufficiently large, be such that $F\left(A_{n_j},\frac{1}{4}\right)<\frac{5}{8}$ and $F\left(A_{n_j},\frac{1}{2}\right)>\frac{7}{8}$. Then $\frac{1}{2}a_{n_j}\leq (2k)!$. Otherwise, we would have $a_{n_j}> 2\cdot (2k)!$ and
\begin{align*}
&\frac{7}{8}<F\left(A_{n_j},\frac{1}{2}\right)=\frac{1}{n_j}\#\left\{i\leq n_j: \frac{a_i}{a_{n_j}}<\frac{1}{2}\right\}=\frac{1}{n_j}\#\left\{i\leq n_j: a_i<\frac{1}{2}a_{n_j}\right\}\\
&<\frac{(2k-1)!+\frac{1}{2}a_{n_j}-(2k)!}{(2k-1)!-(2k-2)!+a_{n_j}-(2k)!}=\frac{\frac{1}{2k}+\frac{1}{2}\frac{a_{n_j}}{(2k)!}-1}{\frac{2k-2}{2k(2k-1)}+\frac{a_{n_j}}{(2k)!}-1}<\frac{1}{2},
\end{align*}
which is impossible. Thus $\frac{1}{2}a_{n_j}\leq (2k)!$ and
\begin{align*}
&\frac{7}{8}<F\left(A_{n_j},\frac{1}{2}\right)=\frac{1}{n_j}\#\left\{i\leq n_j: a_i<\frac{1}{2}a_{n_j}\right\}<\frac{(2k-1)!}{(2k-1)!-(2k-2)!+a_{n_j}-(2k)!}.
\end{align*}
Hence, $a_{n_j}<(2k)!+\frac{1}{7}(2k-1)!+(2k-2)!$. On the other hand,
\begin{align*}
&\frac{5}{8}>F\left(A_{n_j},\frac{1}{4}\right)=\frac{1}{n_j}\#\left\{i\leq n_j: a_i<\frac{1}{4}a_{n_j}\right\}>\frac{(2k-1)!-(2k-2)!}{(2k-1)!+a_{n_j}-(2k)!}.
\end{align*}
Hence, $a_{n_j}>(2k)!+\frac{3}{5}(2k-1)!-\frac{8}{5}(2k-2)!$. Finally, we obtain
$$(2k)!+\frac{1}{7}(2k-1)!+(2k-2)!>(2k)!+\frac{3}{5}(2k-1)!-\frac{8}{5}(2k-2)!,$$
which holds only if $2k-1<\frac{91}{16}$, i.e. $k<\frac{107}{32}$. This stays in contradiction with the fact that $k$ can be sufficiently large.
}
\end{Remark}

The above remark shows that not always there exists a distribution function $g(x)$ of $(A_n)$ such that $g(c)=\gamma$ and $g(d)=\delta$ for some $c,d,\gamma,\delta\in [0,1]$ satisfying $\liminf_{t\to\infty}\frac{A(ct)}{A(t)}\leq\gamma\leq\limsup_{t\to\infty}\frac{A(ct)}{A(t)}$ and $\liminf_{t\to\infty}\frac{A(dt)}{A(t)}\leq\delta\leq\limsup_{t\to\infty}\frac{A(dt)}{A(t)}$. Despite this, for any $c,\gamma\in [0,1]$ satisfying $\liminf_{t\to\infty}\frac{A(ct)}{A(t)}\leq\gamma\leq\limsup_{t\to\infty}\frac{A(ct)}{A(t)}$ there exists a distribution function $g(x)$ of $(A_n)$ such that $g(c)=\gamma$. In order to prove this fact, we will need the following lemma.

\begin{Lemma}\label{L2}
Let $c\in (0,1)$ and $\gamma\in\left[\liminf_{t\to\infty}\frac{A(ct)}{A(t)}, \limsup_{t\to\infty}\frac{A(ct)}{A(t)}\right]$. Then there exists an increasing sequence $(n_k)$ of positive integers such that $\lim_{k\to\infty}\frac{A\left(ca_{n_k}\right)}{n_k}=\gamma$.
\end{Lemma}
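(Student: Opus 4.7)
I will rephrase the hypothesis in terms of the discrete sequence $b_n := A(ca_n)/n$ and then exploit a one-sided regularity: $(b_n)$ can drop by at most $1/(n+1)$ from one index to the next. This allows an intermediate-value argument to go through even though upward jumps in $(b_n)$ can be arbitrarily large.

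\textbf{First step: discretization.} For any $t \in [a_n, a_{n+1})$ one has $A(t) = n$ and $A(ct) \geq A(ca_n)$, hence $A(ct)/A(t) \geq b_n$, with equality at $t = a_n$; a symmetric computation at the left limit $t \to a_{n+1}^-$ shows that the supremum of $A(ct)/A(t)$ on the same interval differs from $b_{n+1}$ by $O(1/n)$. Passing to $\liminf$ and $\limsup$ as $t \to \infty$ gives
\[
\liminf_{t\to\infty}\frac{A(ct)}{A(t)}=\liminf_{n\to\infty} b_n \quad\text{and}\quad \limsup_{t\to\infty}\frac{A(ct)}{A(t)}=\limsup_{n\to\infty} b_n,
\]
so the hypothesis becomes $\gamma \in [\liminf_n b_n,\,\limsup_n b_n]$. (These identities can also be extracted from Lemma~\ref{L1} applied with $1/c$ in place of $c$, followed by reciprocation.)

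\textbf{Second step: slow decrease and IVT.} Since $A(ca_{n+1}) \geq A(ca_n)$ and $b_n \leq 1$, one has
\[
b_{n+1} \;=\; \frac{A(ca_{n+1})}{n+1} \;\geq\; \frac{A(ca_n)}{n+1} \;=\; \frac{n}{n+1}\, b_n \;\geq\; b_n - \frac{1}{n+1}.
\]
If $\gamma$ equals $\liminf_n b_n$ or $\limsup_n b_n$, the required subsequence is immediate from the definition. Otherwise both $\{n : b_n < \gamma\}$ and $\{n : b_n > \gamma\}$ are infinite, and one can pick indices $m_1 < l_1 < m_2 < l_2 < \cdots$ tending to $\infty$ with $b_{m_j} > \gamma > b_{l_j}$. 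For each $j$ let $n_j$ be the largest index in $[m_j, l_j)$ with $b_{n_j} \geq \gamma$; then $b_{n_j+1} < \gamma$ by maximality, and the slow-decrease inequality yields $\gamma \leq b_{n_j} \leq b_{n_j+1} + 1/(n_j+1) < \gamma + 1/(n_j+1)$. Since $n_j \geq m_j \to \infty$, the sequence $(n_j)$ is strictly increasing and $b_{n_j} \to \gamma$.

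\textbf{Main obstacle.} The subtle point is recognizing that one-sided regularity is enough. Upward jumps of $(b_n)$ can be of order $1$, so a naive IVT seems to fail; what saves the argument is that the reverse direction is controlled by $1/(n+1)$, forcing every descent from above $\gamma$ to below $\gamma$ to land within $1/(n+1)$ of $\gamma$. Once this is noticed, the rest of the proof is essentially one line.
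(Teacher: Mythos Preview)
Your proof is correct and rests on the same core observation as the paper's: the relevant quantity can only decrease by $O(1/n)$ at each step, so a discrete intermediate-value argument applies. The paper phrases this in terms of the piecewise-constant function $t\mapsto A(ct)/A(t)$, noting that its only downward jumps occur at $t=a_n$ and have size at most $1/n$, and then asserts (somewhat tersely) that one may therefore realize any intermediate value along a subsequence of the $a_n$; you instead discretize immediately to $b_n=A(ca_n)/n$, verify that the $\liminf/\limsup$ are unchanged, and carry out the crossing argument explicitly. The two routes are equivalent, but your version spells out the intermediate-value step that the paper leaves implicit.
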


\begin{proof}
Every value from the interval $\Big[\liminf\limits_{t\to\infty}\frac{A(ct)}{A(t)}, \limsup\limits_{t\to\infty}\frac{A(ct)}{A(t)}\Big]$ is attained as a limit $\lim_{k\to\infty}\frac{A(ct_k)}{A(t_k)}$ for some sequence $(t_k)$ of real numbers increasing to infinity. This holds because the function $[a_1,\infty)\ni t\mapsto\frac{A(ct)}{A(t)}\in [0,\infty)$ is a piecewise constant function with the points of discontinuity of the form $a_n$ and $\frac{1}{c}a_n$, where $n\in\NN$, and variations at these points not greater than $\frac{1}{n}$. Moreover, if $\frac{A(ct)}{A(t)}<\lim_{s\to t^-}\frac{A(cs)}{A(s)}$, then $t=a_n$ for some $n\in\NN$. Hence every value from the interval $\left[\liminf_{t\to\infty}\frac{A(ct)}{A(t)}, \limsup_{t\to\infty}\frac{A(ct)}{A(t)}\right]$ can be attained as $\lim_{k\to\infty}\frac{A\left(ca_{n_k}\right)}{n_k}$ for some increasing sequence $(n_k)$ of positive integers.
\end{proof}

At this moment we are ready to state and prove the aforementioned result.

\begin{Theorem}\label{T2}
Let $c_0\in (0,1)$ and $\gamma\in\left[\liminf_{t\to\infty}\frac{A(c_0t)}{A(t)}, \limsup_{t\to\infty}\frac{A(c_0t)}{A(t)}\right]$. Then there exists $g(x)\in G(A_n)$ such that $g(c_0)=\gamma$.
\end{Theorem}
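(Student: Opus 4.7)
The plan is to combine Lemma~\ref{L2} with Helly's selection theorem. First, by Lemma~\ref{L2} I pick an increasing sequence of indices $(n_k)$ with $\lim_{k\to\infty} A(c_0 a_{n_k})/n_k = \gamma$. Since $A(c_0 a_{n_k})=\#\{i\le n_k: a_i\le c_0 a_{n_k}\}$ and $\#\{i\le n_k: a_i<c_0 a_{n_k}\}$ differ by at most one, this immediately yields $\lim_{k\to\infty} F(A_{n_k}, c_0)=\gamma$. So along the block indices $(n_k)$ the step distribution functions already take the prescribed value $\gamma$ at $c_0$; the remaining task is to upgrade this single-point convergence to weak convergence toward a full distribution function of $(A_n)$.

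Next I would apply Helly's selection theorem to the uniformly bounded, non-decreasing family $\{F(A_{n_k},\cdot)\}_k$ on $[0,1]$: there exists a further subsequence (which I still call $(n_k)$) and a non-decreasing function $\tilde g:[0,1]\to[0,1]$ with $\tilde g(0)=0$, $\tilde g(1)=1$ such that $F(A_{n_k},x)\to\tilde g(x)$ at every continuity point of $\tilde g$. By the definition recalled in Section~2 this already gives $\tilde g\in G(A_n)$. Using the monotonicity of $F(A_{n_k},\cdot)$ in its second argument together with $F(A_{n_k},c_0)\to\gamma$, passing to the limit along continuity points $x<c_0$ of $\tilde g$ yields $\tilde g(x)\le\gamma$, and along continuity points $x>c_0$ yields $\tilde g(x)\ge\gamma$. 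Consequently $\tilde g(c_0^-)\le\gamma\le\tilde g(c_0^+)$.

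Finally I would define $g$ by $g(x)=\tilde g(x)$ for $x\ne c_0$ and $g(c_0)=\gamma$. The inequality between the one-sided limits guarantees that $g$ remains non-decreasing, and $g$ coincides with $\tilde g$ at every common point of continuity. Under the identification convention from Section~2, $g$ and $\tilde g$ represent the same distribution function of $(A_n)$, so $g\in G(A_n)$ and $g(c_0)=\gamma$, as required.

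The main obstacle is exactly the possibility that $c_0$ is a point of discontinuity of the Helly limit $\tilde g$: weak convergence alone then does not pin down the value of $\tilde g$ at $c_0$, and one must invoke the identification of d.f.s coinciding at common points of continuity to legitimize the last redefinition. The Remark preceding the theorem is in fact a warning that this flexibility is confined to one prescribed point — the same construction cannot be carried out simultaneously at two points $c,d$, since Helly's theorem produces only one limit $\tilde g$ whose jump at the second point need not straddle the prescribed value.
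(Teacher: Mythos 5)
Your proof is correct and follows essentially the same route as the paper: the paper's argument is precisely the diagonal-over-rationals construction underlying Helly's selection theorem, applied after Lemma~\ref{L2}, with the value at $c_0$ secured because $c_0$ is kept in the set of points where convergence is arranged from the start. Your only deviation is invoking Helly as a black box and then patching $g(c_0):=\gamma$ via the inequality $\tilde g(c_0^-)\leq\gamma\leq\tilde g(c_0^+)$ and the identification convention, which is a valid and equivalent way to finish.
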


\begin{proof}
By Lemma \ref{L2} there exists an increasing sequence $(n_k^{(0)})_{k\in\NN}$ of positive integers such that $\lim_{k\to\infty}F\left(A_{n_k^{(0)}},c_0\right)=\lim_{k\to\infty}\frac{A\left(ca_{n_k}\right)}{n_k^{(0)}}=\gamma$. Let $(\QQ\cap (0,1))\bs\{c_0\}=\{c_l: l\in\NN\}$. Since the interval $[0,1]$ is compact (with respect to the natural topology on $\RR$), for each $l\in\NN$ we can choose a subsequence $(n_k^{(l)})_{k\in\NN}$ of the sequence $(n_k^{(l-1)})_{k\in\NN}$ such that the sequence $\left(F\left(A_{n_k^{(l)}},c_l\right)\right)_{k\in\NN}$ is convergent. From the construction of sequences $(n_k^{(l)})_{k\in\NN}$, $l\in\NN$, it follows that the sequence $\left(F\left(A_{n_k^{(l)}},c_j\right)\right)_{k\in\NN}$ is convergent for each $j\in\{0,1,\ldots,l\}$. Thus, the sequence $\left(F\left(A_{n_k^{(k)}},c_j\right)\right)_{k\in\NN}$ is convergent for each $j\in\NN$. Define $g(x)=\limsup_{k\to\infty}F\left(A_{n_k^{(k)}},c_j\right)$ for $x\in [0,1]$. $g$ is a non-decreasing function. In particular, $g$ has at most countably many points of discontinuity. Let $x\in (0,1)$ be a point of continuity of $g$. Then 
$$\lim_{\QQ\ni q\to x^-}\lim_{k\to\infty}F\left(A_{n_k^{(k)}},q\right)=\lim_{\QQ\ni q\to x^-}g(q)=g(x)=\lim_{\QQ\ni q\to x^+}g(q)=\lim_{\QQ\ni q\to x^+}\lim_{k\to\infty}F\left(A_{n_k^{(k)}},q\right).$$
On the other hand, 
$$\lim_{\QQ\ni q\to x^-}\lim_{k\to\infty}F\left(A_{n_k^{(k)}},q\right)\leq\liminf_{k\to\infty}F\left(A_{n_k^{(k)}},x\right)\leq\limsup_{k\to\infty}F\left(A_{n_k^{(k)}},x\right)\leq\lim_{\QQ\ni q\to x^+}\lim_{k\to\infty}F\left(A_{n_k^{(k)}},q\right).$$
Hence, $g(x)=\lim_{k\to\infty}F\left(A_{n_k^{(k)}},x\right)$ for every $x\in [0,1]$ with at most countably many exceptions. This means, that $g(x)\in G(A_n)$.
\end{proof}

Now we give a series of corollaries following from Theorem \ref{T2}.

\begin{Corollary}\label{C0}
If $G(A_n)$ is not a singleton, then $\# G(A_n)=\mathfrak{c}$.
\end{Corollary}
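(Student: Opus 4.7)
My plan is to prove $\# G(A_n)=\mathfrak{c}$ by splitting into the two inequalities $\# G(A_n)\geq\mathfrak{c}$ and $\# G(A_n)\leq\mathfrak{c}$. The lower bound is the substantive content and reduces almost immediately to Theorem \ref{T2}; the upper bound is a general structural fact about distribution functions.

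For the lower bound, I would begin by unpacking the hypothesis: since $G(A_n)$ is not a singleton under the convention of identification at common continuity points, there exist $g_1,g_2\in G(A_n)$ and a point $c_0\in(0,1)$ which is a point of continuity of both $g_1$ and $g_2$ and at which their values differ; set $\gamma_1=g_1(c_0)$, $\gamma_2=g_2(c_0)$ and assume without loss of generality $\gamma_1<\gamma_2$. Applying Theorem \ref{T1} ii) to $g_1$ and to $g_2$ at the common continuity point $c_0$, both $\gamma_1$ and $\gamma_2$ — hence the whole subinterval $[\gamma_1,\gamma_2]$ — lie in $\bigl[\liminf_{t\to\infty}\frac{A(c_0t)}{A(t)},\limsup_{t\to\infty}\frac{A(c_0t)}{A(t)}\bigr]$. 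Then Theorem \ref{T2}, applied once for every $\gamma\in[\gamma_1,\gamma_2]$, produces a d.f.\ $g_\gamma\in G(A_n)$ with $g_\gamma(c_0)=\gamma$. The assignment $\gamma\mapsto g_\gamma$ is injective, because distinct values of $\gamma$ force distinct values at $c_0$ (which is a continuity point of every $g_\gamma$, so the identification cannot blur the difference). This gives $\# G(A_n)\geq\#[\gamma_1,\gamma_2]=\mathfrak{c}$.

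For the upper bound, I would use that every distribution function $g:[0,1]\to[0,1]$ is non-decreasing, hence has at most countably many discontinuities, and, under the paper's identification, is fully determined by its values on $\QQ\cap[0,1]$ at its continuity points. Restricting to such values embeds the set of equivalence classes of d.f.s into $[0,1]^{\QQ\cap[0,1]}$, a set of cardinality $\mathfrak{c}^{\aleph_0}=\mathfrak{c}$; a fortiori $\# G(A_n)\leq\mathfrak{c}$. Combining the two bounds gives the corollary.

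The only real obstacle I foresee is the first step, namely ensuring that two non-identified $g_1,g_2\in G(A_n)$ actually differ at some common continuity point lying in the open interval $(0,1)$. This is handled by observing that the discontinuity sets of $g_1$ and $g_2$ are each at most countable, so their common continuity points form a cocountable (hence dense) subset of $[0,1]$, and the equivalence relation is designed precisely so that non-equivalent d.f.s must disagree somewhere on this dense set; since the boundary values $g(0)=0$ and $g(1)=1$ are fixed, any such disagreement must occur in $(0,1)$. Once this point is in hand, the remainder of the argument is only the two applications of Theorems \ref{T1} and \ref{T2} sketched above.
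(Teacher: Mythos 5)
Your overall strategy (lower bound via Theorems \ref{T1} and \ref{T2} at a common continuity point, upper bound by counting monotone functions up to the identification) is the same as the paper's, and most of the pieces are sound: the upper bound, the existence of a point $c_0\in(0,1)$ that is a common continuity point of $g_1$ and $g_2$ at which they differ, and the inclusion $[\gamma_1,\gamma_2]\subset\bigl[\liminf_{t\to\infty}\frac{A(c_0t)}{A(t)},\limsup_{t\to\infty}\frac{A(c_0t)}{A(t)}\bigr]$. The gap is in the injectivity step. You assert that $c_0$ ``is a continuity point of every $g_\gamma$'', but you only chose $c_0$ to be a continuity point of $g_1$ and $g_2$; Theorem \ref{T2} gives, for each $\gamma\in[\gamma_1,\gamma_2]$, \emph{some} $g_\gamma\in G(A_n)$ with $g_\gamma(c_0)=\gamma$, and nothing in its statement or proof prevents $g_\gamma$ from having a jump at $c_0$. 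Under the paper's convention (two d.f.s are the same element of $G(A_n)$ if they coincide at common points of continuity), two functions $g_\gamma$ and $g_{\gamma'}$ that both jump at $c_0$ can take distinct values $\gamma\neq\gamma'$ there and nevertheless be identified; a single equivalence class can in principle absorb a whole subinterval of values of $\gamma$ in this way. So distinctness of the values at $c_0$ does not yield injectivity of $\gamma\mapsto g_\gamma$ into $G(A_n)$, and the bound $\# G(A_n)\geq\mathfrak{c}$ does not follow as written.

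The paper repairs exactly this point by arguing by contradiction: assuming $1<\# G(A_n)<\mathfrak{c}$, the set $\mathcal{D}$ of discontinuity points of \emph{all} members of $G(A_n)$ has cardinality $<\mathfrak{c}$, so one can choose $c_0\in(0,1)\setminus\mathcal{D}$ at which two members differ; then $c_0$ is genuinely a continuity point of every element of $G(A_n)$, distinct values at $c_0$ force distinct elements, and Theorem \ref{T2} produces $\mathfrak{c}$ of them, contradicting the assumed cardinality. The remark following the corollary in the paper flags precisely this subtlety: your direct argument would be complete if one distinguished any two functions differing at a single point, but not under the adopted identification. To close the gap, either adopt the contradiction structure or otherwise control the behaviour at $c_0$ of the functions produced by Theorem \ref{T2}.
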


\begin{proof}
Since distribution functions are non-decreasing and we identify two of them if they coincide on their common points of continuity, we can assume that all the elements of $G(A_n)$ are right-continuous functions. Assume that $1<\# G(A_n)<\mathfrak{c}$. Then we consider the set
$$\cal{D}=\{b\in [0,1]: b\text{ is a point of discontinuity of some }g(x)\in G(A_n)\}.$$
Then $\#\cal{D}\leq\# G(A_n)<\mathfrak{c}$. Thus $(0,1)\bs\cal{D}$ is dense in $[0,1]$. We claim that there exist $g_1(x),g_2(x)\in G(A_n)$ and $c_0\in (0,1)\bs\cal{D}$ such that $g_1(c_0)<g_2(c_0)$. Indeed, if any two functions from $G(A_n)$ (that are monotone) coincide on $c\in (0,1)\bs\cal{D}$ (which is dense in $[0,1]$), then they are equal. This means that $\# G(A_n)=1$, contradicting our assumption on cardinality of $G(A_n)$. Let $c_0\in (0,1)\bs\cal{D}$ be such that $g_1(c_0)<g_2(c_0)$. By Theorem \ref{T1} we have 
$$\liminf_{n\to\infty}F(A_n,c)\leq g_1(c)<g_2(c)\leq\limsup_{n\to\infty}F(A_n,c),$$
meanwhile by Theorem \ref{T2} we conclude that for each $\gamma\in\left[\liminf_{n\to\infty}F(A_n,c),\limsup_{n\to\infty}F(A_n,c)\right]$ there exists $g(x)\in G(A_n)$ such that $g(c)=\gamma$. Since $c$ is a point of continuity of any function from $G(A_n)$, we obtain $\mathfrak{c}$ pairwise distinct functions from $G(A_n)$, once again contradicting the assumption on cardinality of $G(A_n)$. Finally, we get that if $G(A_n)$ is not a singleton, then $\# G(A_n)\geq\mathfrak{c}$. On the other hand, the cardinality of $G(A_n)$ does not exceed $\mathfrak{c}$ as the cardinality of the set of all functions mapping the interval $[0,1]$ to itself and having at most countably many points of discontinuity.
\end{proof}

\begin{Remark}
{\rm The fact $\# G(A_n)\in\{1,\mathfrak{c}\}$ was proved implicitly in \cite[Theorem 5.1]{ST} under additional condition on distribution of elements of $A$. To be more precise, the connectedness of $G(A_n)$ with respect to weak topology was showed in this case.

Let us also note that Corollary \ref{C0} follows directly from Theorem \ref{T2} if we distinguish any two functions that differ at at least one point. However, we identify two functions in $G(A_n)$ if they differ on at most countably many points. This is the purpose that the proof of Corollary \ref{C0} becomes more subtle.}
\end{Remark}

\begin{Corollary}\label{C1}
	Let $A=\{a_1<a_2<\cdots\}\subset\NN$. Then, the following conditions are equivalent:
	\begin{itemize}
		\item[(i)]  $G(A_n)=\{g(x)\}$,
		\item[(ii)]   $\forall\ c>0:\ \lim\limits_{t\to\infty}\frac{A(ct)}{A(t)}$ exists,
		\item[(iii)]  $\forall\ c>1:\ \lim\limits_{t\to\infty}\frac{A(ct)}{A(t)}=\frac{1}{g\big(\frac 1c\big)}$,
		\item[(iv)]  $\forall\  0<c<1:\ \lim\limits_{t\to\infty}\frac{A(ct)}{A(t)}=g(c)$.
	\end{itemize}	
\end{Corollary}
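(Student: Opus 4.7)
The plan is to close the cycle (i) $\Rightarrow$ (iv) $\Rightarrow$ (ii) $\Rightarrow$ (i) and to supply the equivalence (iii) $\Leftrightarrow$ (iv). Both this equivalence and the $c>1$ instance of (iv) $\Rightarrow$ (ii) reduce to the same change of variable: if (iv) says $\lim_{t\to\infty} A(c't)/A(t) = g(c')$ for $c' = 1/c \in (0,1)$, setting $s = c't$ rewrites this as $\lim_{s\to\infty} A(s)/A(cs) = g(1/c)$ and hence $\lim_{s\to\infty} A(cs)/A(s) = 1/g(1/c)$; the reverse substitution is identical. The case $c=1$ in (ii) is trivial.

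For (ii) $\Rightarrow$ (i) I will apply Theorem \ref{T1}(ii): for any $\tilde g \in G(A_n)$ and any continuity point $c \in (0,1)$ of $\tilde g$,
\[
\liminf_{t\to\infty}\frac{A(ct)}{A(t)} \leq \tilde g(c) \leq \limsup_{t\to\infty}\frac{A(ct)}{A(t)}.
\]
Under (ii) these two quantities coincide, so $\tilde g(c)$ is pinned down at each of its continuity points. Since any two distribution functions in $G(A_n)$ share all but countably many continuity points, they then agree at common continuity points and are identified under the standing equivalence relation on distribution functions, giving $\# G(A_n) = 1$.

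For (i) $\Rightarrow$ (iv) the key tool is Theorem \ref{T2} together with (A1). Assume $G(A_n) = \{g\}$; by (A1), $g \in \{c_0(x), x^q\}$, in particular $g$ is continuous on $(0,1)$. Fix $c \in (0,1)$ and any $\gamma$ in the interval $\left[\liminf_{t\to\infty} A(ct)/A(t),\; \limsup_{t\to\infty} A(ct)/A(t)\right]$. Theorem \ref{T2} produces $\tilde g \in G(A_n) = \{g\}$ with $\tilde g(c) = \gamma$. Since $c$ is a continuity point of $g$, every representative of the equivalence class is forced to take the value $g(c)$ there, so $\gamma = g(c)$. This collapses the interval and yields $\lim_{t\to\infty} A(ct)/A(t) = g(c)$.

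I expect the only real subtlety to be precisely this last step: if $g$ were allowed to have a discontinuity inside $(0,1)$, representatives of its equivalence class could take distinct values at such a point, and the collapse of the $[\liminf,\limsup]$ interval would fail; invoking (A1) is exactly what rules this scenario out and makes the implication (i) $\Rightarrow$ (iv) work uniformly in $c$.
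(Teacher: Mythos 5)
Your proposal is correct and rests on exactly the same ingredients as the paper's proof: Theorem \ref{T1}, Theorem \ref{T2}, and (A1), with (A1) supplying the continuity of $g$ on $(0,1)$ that makes the identification of distribution functions at common continuity points unambiguous. The only cosmetic difference is that you establish (i)$\Rightarrow$(iv) directly by collapsing the interval $\left[\liminf_{t\to\infty}\frac{A(ct)}{A(t)},\limsup_{t\to\infty}\frac{A(ct)}{A(t)}\right]$ onto $g(c)$, whereas the paper argues the contrapositive of (i)$\Rightarrow$(ii) by exhibiting two genuinely distinct distribution functions; the underlying mechanism is identical.
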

\begin{proof}
According to Theorem~\ref{T1}, it suffices to prove the equvalence i) $\Leftrightarrow$ ii). For the proof of ii) $\Rightarrow$ i) assume that $\lim\limits_{t\to\infty}\frac{A(ct)}{A(t)}$ exists for each $c>0$. By Theorem \ref{T1}, each distribution fuction for $(A_n)$ is of the form $g(x)=\lim\limits_{t\to\infty}\frac{A(xt)}{A(t)}$, $x\in (0,1)$.

We prove the implication i)$\Rightarrow$ii) by contrary. If ii) does not hold, i.e. $\liminf\limits_{t\to\infty}\frac{A(ct)}{A(t)}\neq\limsup\limits_{t\to\infty}\frac{A(ct)}{A(t)}$ for some $c\in (0,1)$, then the set $G(A_n)$ contains functions $g_1(x)$ and $g_2(x)$ such that $g_1(c)=\liminf\limits_{t\to\infty}\frac{A(ct)}{A(t)}$ and $g_2(c)=\limsup\limits_{t\to\infty}\frac{A(ct)}{A(t)}$. However, we assume i), i.e. $G(A_n)=\{g(x)\}$. Thus, $g(x)\in\{c_0(x),x^q: q\in (0,1]\}$ in virtue of (A1). Since $g_1$, $g_2$ and $g$ are non-decreasing and $g|_{(0,1)}$ is continuous, then there exists $i\in\{1,2\}$ and $\varepsilon>0$ such that $g_i(x)\neq g(x)$ for each $x\in(c-\varepsilon,c+\varepsilon)$. Hence $G(A_n)$ has at least two elements, contradicting i). 
\end{proof}

\begin{Corollary}\label{C2}
Let $A=\{a_1<a_2<\cdots\}\subset\NN$ and $0<q\leq1$. Then, we have
\begin{itemize}
  \item[(i)]  $A\in\Uc\ \Longleftrightarrow\ \forall\  c>0:\ \lim_{t\to\infty}\frac{A(ct)}{A(t)}=1$, i.e. $A(x)\in \RV_0$.
  \item[(ii)]   $A\in\Uq\ \Longleftrightarrow\ \forall\  c>0:\ \lim_{t\to\infty}\frac{A(ct)}{A(t)}=c^q$ , i.e. $A(x)\in \RV_q$.
\end{itemize}	
\end{Corollary}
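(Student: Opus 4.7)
The plan is to derive both equivalences directly from Corollary \ref{C1}, which already transforms membership in $\Uc$ or $\Uq$ (via $G(A_n)$ being a singleton) into the existence of $\lim_{t\to\infty}\frac{A(ct)}{A(t)}$ for every $c>0$, together with explicit formulas in terms of the distribution function. Once we identify which distribution function is forced by the limit, and recall the definition of $\RV_\alpha$, both statements follow almost immediately.

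First I would dispatch the direction ($\Rightarrow$) of (i). If $A\in\Uc$, then $G(A_n)=\{c_0(x)\}$, and Corollary \ref{C1}(iv) gives $\lim_{t\to\infty}\frac{A(ct)}{A(t)}=c_0(c)=1$ for every $c\in(0,1)$; the case $c=1$ is trivial, while for $c>1$ Corollary \ref{C1}(iii) gives $\lim_{t\to\infty}\frac{A(ct)}{A(t)}=1/c_0(1/c)=1$. An identical argument handles (ii)$(\Rightarrow)$: if $A\in\Uq$, Corollary \ref{C1}(iv) yields $\lim_{t\to\infty}\frac{A(ct)}{A(t)}=c^q$ for $0<c<1$, trivially for $c=1$, and Corollary \ref{C1}(iii) gives $\lim_{t\to\infty}\frac{A(ct)}{A(t)}=1/(1/c)^q=c^q$ for $c>1$.

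For the converse direction ($\Leftarrow$) in both parts, I would apply Corollary \ref{C1}(ii)$\Rightarrow$(i): the hypothesis that $\lim_{t\to\infty}\frac{A(ct)}{A(t)}$ exists for every $c>0$ forces $G(A_n)$ to be a singleton, say $G(A_n)=\{g(x)\}$. By (A1) we know $g\in\{c_0(x)\}\cup\{x^r:0<r\leq 1\}$. Now Corollary \ref{C1}(iv) identifies $g$ on $(0,1)$: in case (i) the hypothesis $\lim=1$ forces $g(c)=1$ for $c\in(0,1)$, ruling out all $x^r$ and leaving $g=c_0$, so $A\in\Uc$; in case (ii) the hypothesis $\lim=c^q$ forces $g(c)=c^q$ on $(0,1)$, so $g(x)=x^q$ and $A\in\Uq$.

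Finally, the parenthetical statements about regular variation are immediate: the relation $\lim_{x\to\infty}\frac{A(cx)}{A(x)}=1$ for all $c>0$ is exactly the definition of $A(x)\in\RV_0$, and $\lim_{x\to\infty}\frac{A(cx)}{A(x)}=c^q$ for all $c>0$ is exactly the definition of $A(x)\in\RV_q$, as stated in Section 2. There is essentially no obstacle here; the whole content of the corollary is packaged into Corollary \ref{C1} and (A1), so the proof is a short bookkeeping argument distinguishing $c<1$, $c=1$, and $c>1$ and identifying $g$ from its values on $(0,1)$.
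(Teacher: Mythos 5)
Your proposal is correct and follows exactly the route the paper intends: the paper's own proof is the one-line remark that the corollary ``is a direct corollary of Corollary \ref{C1} and (A1)'', and your argument is simply the careful unpacking of that, using Corollary \ref{C1}(ii)$\Rightarrow$(i) to get a singleton $G(A_n)$, (A1) to restrict the possible distribution functions, and parts (iii)--(iv) to identify the limit with $g$. Nothing is missing.
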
	

\begin{proof}
	This is a direct corollary of Corollary~\ref{C1} and (A1).
\end{proof}

\begin{Corollary}\label{C3}
Let $A=\{a_1<a_2<\cdots\}\subset\NN$. Then, the following conditions are equivalent:
\begin{itemize}
  \item[(i)]  $\forall\ c>1:\ \lim_{t\to\infty}\frac{A(ct)}{A(t)}$ exists,
  \item[(ii)]   $\exists q\in [0,1]: A(x)\in \RV_q$.
\end{itemize}	
\end{Corollary}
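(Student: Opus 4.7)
The implication (ii) $\Rightarrow$ (i) is immediate from the definition of regular variation: if $A(x) \in \RV_q$ for some $q \in [0,1]$, then $\lim_{t \to \infty} A(ct)/A(t) = c^q$ exists for every $c > 0$, and in particular for every $c > 1$. So the real work is the converse.

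For (i) $\Rightarrow$ (ii), the plan is to reduce the hypothesis to Corollary \ref{C1} and then read off the index using (A1) and Corollary \ref{C2}. First I would upgrade ``the limit exists for all $c > 1$'' to ``the limit exists for all $c > 0$''. The case $c = 1$ is trivial, and for $c \in (0,1)$ I would use the substitution $s = ct$ to rewrite
$$\frac{A(ct)}{A(t)} = \frac{A(s)}{A((1/c)s)}.$$
Since $1/c > 1$, the hypothesis gives $\lim_{s \to \infty} A((1/c)s)/A(s) =: L$, and monotonicity of $A$ forces $L \geq 1$, so $\lim_{t \to \infty} A(ct)/A(t) = 1/L$ exists as well. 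This verifies condition (ii) of Corollary \ref{C1}, which tells me that $G(A_n)$ is a singleton $\{g(x)\}$. By (A1) the only possibilities are $g(x) = c_0(x)$ or $g(x) = x^q$ for some $q \in (0,1]$, and Corollary \ref{C2} then identifies the regular variation index: $A(x) \in \RV_0$ in the former case and $A(x) \in \RV_q$ in the latter. In every case (ii) holds with some $q \in [0,1]$.

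The only slightly delicate point is the reciprocal manipulation converting $c > 1$ to $c \in (0,1)$: I must ensure that $L$ is not zero so that $1/L$ gives a well-defined finite limit, but this is automatic because $A$ is non-decreasing and $1/c > 1$ forces $L \geq 1$. Beyond this, the argument is a direct assembly of results already established in the paper, so I do not anticipate any substantive obstacles.
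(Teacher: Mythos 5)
Your proposal is correct and follows essentially the same route as the paper: reduce (i) to the hypothesis of Corollary \ref{C1} by passing from $c>1$ to $c\in(0,1)$ via reciprocals, conclude that $G(A_n)$ is a singleton, and then combine (A1) with Corollary \ref{C2} to identify the index of regular variation. Your explicit check that the reciprocal limit is well defined is a small refinement of what the paper leaves implicit, but the argument is the same.
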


\begin{proof}
For the proof of the implication i)$\Rightarrow$ ii) denote $g\left(\frac{1}{c}\right)=\lim_{t\to\infty}\frac{A(t)}{A(ct)}$ for each $c\geq 1$. By Corollary~\ref{C1} we have $G(A_n)=\{g(x)\}$, which means that $A\in\mathcal{U}(g(x))$. Hence, by (A1) we have $g(x)\in\{c_0(x),x^q: q\in (0,1]\}$ and by Corollary \ref{C2} we conclude that $A(x)\in \RV_q$ for some $q\in [0,1]$.

The implication ii)$\Rightarrow$ i) is obvious.
\end{proof}

From Corollary~\ref{C2}, (A7) and (A8) we obtain the following.

\begin{Corollary}\label{C4}
Let $A=\{a_1<a_2<\cdots\}\subset\NN$ and $0<q\leq1$. Then we have	
\begin{itemize}
	\item [(i)] If $\lim_{t\to\infty}\frac{A(ct)}{A(t)}=1$,\ for every $c>0$ then $\lim_{n\to\infty}\frac{\log n}{\log a_n}=0,\ \textrm{i.e.}\ \lambda(A)=0$.
	\item [(ii)] If $\lim_{t\to\infty}\frac{A(ct)}{A(t)}=c^q$,\ for every $c>0$ then $\lim_{n\to\infty}\frac{\log n}{\log a_n}=q\ (\textrm{and then}\ \lambda(A)=q)$.
\end{itemize}
\end{Corollary}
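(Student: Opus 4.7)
The plan is extremely short, since this statement is flagged in the text as a direct consequence of Corollary \ref{C2} together with (A7) and (A8). No new ideas are needed; the proof is a two-step chain of implications for each part.

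For part (i), I would start with the hypothesis $\lim_{t\to\infty}\frac{A(ct)}{A(t)}=1$ for every $c>0$. This is precisely the right-hand side of the equivalence in Corollary \ref{C2}(i), so it gives $A\in\Uc$. Then (A7) applies verbatim and yields $\lim_{n\to\infty}\frac{\log n}{\log a_n}=0$, which by the definition recalled in (A15) is equivalent to $\lambda(A)=0$.

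For part (ii), the reasoning is identical: the hypothesis $\lim_{t\to\infty}\frac{A(ct)}{A(t)}=c^q$ for every $c>0$ is the right-hand side of Corollary \ref{C2}(ii), hence $A\in\Uq$. Then (A8) applies verbatim to give $\lim_{n\to\infty}\frac{\log n}{\log a_n}=q$, and the formula from (A15) gives $\lambda(A)=q$.

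There is no real obstacle. The only thing worth pointing out in the write-up is that in part (i) the conclusion $\lim_{n\to\infty}\frac{\log n}{\log a_n}=0$ is genuinely a limit (not merely a $\limsup$), so it matches the formulation of $\lambda$ via (A15) with both $\liminf$ and $\limsup$ equal; the same remark applies in part (ii). Everything else reduces to citing (A7) and (A8).
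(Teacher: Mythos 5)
Your proposal is correct and follows exactly the paper's own derivation: the paper states Corollary \ref{C4} as an immediate consequence of Corollary \ref{C2} combined with (A7) and (A8), which is precisely the two-step chain you describe for each part. Your additional remark that the conclusion is a genuine limit (so that $\lambda(A)$ is obtained with $\liminf$ and $\limsup$ coinciding) is a sensible clarification but does not change the argument.
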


\begin{Remark}\label{liminfnotimpRVq}
{\rm Of course, the condition $\lim_{t\to\infty}\frac{A(ct)}{A(t)}=1$ for each $c>1$ implies that $\lambda(A)=0$. The inverse implication does not hold as $\Uc\subsetneq\Ic$ \cite[Example 1]{TBFM}. This means that there exist sets $B\subset A\subset\NN$ such that $A\in\Uc$ but $B\not\in\Uc$. By Corollary \ref{C2} we thus have that $B(x)\not\in\RV_0$. However, since $A(x)\in\RV_0$, by Corollary \ref{C3} we conclude that $\lim_{n\to\infty}\frac{\log A(n)}{\log n}=\lambda(A)=0$. Hence, $\lim_{n\to\infty}\frac{\log B(n)}{\log n}=\lambda(B)=0$ as $B\subset A$ and the exponent of convergence is a monotone set function.}
\end{Remark}

\begin{Remark}\label{liminfnotimplim}
{\rm Let us note that the condition $\liminf_{t\to\infty}\frac{A(ct)}{A(t)}>1$ for each $c>1$ does not imply that $\lim_{t\to\infty}\frac{A(ct)}{A(t)}$ exists. Let
$$A=\NN\cap\bigcup_{k=1}^{\infty}\left(((2k-1)!,(2k)!]\cup\{2j:\ (2k)!<2j\leq (2k+1)!\}\right).$$
Fix arbitrary $c>1$. Then
\begin{align*}
&\liminf_{t\to\infty}\frac{A(ct)}{A(t)}=\lim_{k\to\infty}\frac{A(c\cdot (2k)!)}{A((2k)!)}=1+\lim_{k\to\infty}\frac{A(c\cdot (2k)!)-A((2k)!)}{A((2k)!)}=1+\lim_{k\to\infty}\frac{\frac{(c-1)\cdot (2k)!}{2}}{(2k)!}\\
&=1+\frac{c-1}{2}=\frac{c+1}{2}>1
\end{align*}
and
\begin{align*}
&\limsup_{t\to\infty}\frac{A(ct)}{A(t)}=\lim_{k\to\infty}\frac{A(c\cdot (2k-1)!)}{A((2k-1)!)}=1+\lim_{k\to\infty}\frac{A(c\cdot (2k-1)!)-A((2k-1)!)}{A((2k-1)!)}\\
&=1+\lim_{k\to\infty}\frac{(c-1)\cdot (2k-1)!}{\frac{(2k-1)!}{2}}=1+2(c-1)=2c-1>\frac{c+1}{2}.
\end{align*}}
\end{Remark}

The example below shows that for each $q\in (0,1]$ there exists a set $A\subset\NN$ such that the limit $\lim_{n\to\infty}\frac{\log A(n)}{\log n}$ does not exist and $\lambda(A)=q$.

\begin{Example}\label{nolim}
{\rm Let $0\leq p<q\leq 1$ be fixed. Let $(p_k)$ be a non-increasing sequence of positive numbers convregent to $p$ and $(b_k)$ be a strictly increasing sequence of positive integers such that $$\frac{b_1^q}{b_2^p}<1,\ \frac{b_k^q}{b_{k+1}^{p_{\lceil k/2\rceil}}}\downarrow 0\ \textrm{as}\ k\to\infty\ \textrm{and}\ \lim_{k\to\infty}\frac{\log k}{\log b_k}=0\ .$$ 
For $k\in\NN$ we define sets
\begin{align*}
A_{2k-1}=&\left\{\left\lceil j^{\frac{1}{p_k}}\right\rceil:j\in\NN\ \mbox{and}\ b_{2k-1}<j^{\frac{1}{p_k}}\leq b_{2k}\right\},\\
A_{2k}=&\left\{\left\lceil j^{\frac{1}{q}}\right\rceil:j\in\NN\ \mbox{and}\ b_{2k}<j^{\frac{1}{q}}\leq b_{2k+1}\right\}.
\end{align*}
Choose the set
$$A=\bigcup_{k=1}^{\infty}A_k\ .$$
Then,
\begin{align*}
&\lim_{k\to\infty}\frac{\log A(b_{2k+1})}{\log b_{2k+1}}\geq\lim_{k\to\infty}\frac{\log\#A_{2k}}{\log b_{2k+1}}\geq\lim_{k\to\infty}\frac{\log (b_{2k+1}^q-b_{2k}^q-1)}{\log b_{2k+1}}\\
&=\lim_{k\to\infty}\frac{\log b_{2k+1}^q+\log \big(1-\frac{b_{2k}^q}{b_{2k+1}^q}-\frac1{b_{2k+1}^q}\big)}{\log b_{2k+1}}=\lim_{k\to\infty}\frac{\log b_{2k+1}^q}{\log b_{2k+1}}=q
\end{align*}
(because the second equality in the second line holds due to the fact that $\lim_{k\to\infty}\frac{b_{2k}^q}{b_{2k+1}^q}=0$) and
\begin{align*}
&\lim_{k\to\infty}\frac{\log A(b_{2k})}{\log b_{2k}}=\lim_{k\to\infty}\frac{\log(\#A_{2k-1}+\sum_{j=1}^{k-1}(\# A_{2j-1}+\# A_{2j}))}{\log b_{2k}}\\
&\leq\lim_{k\to\infty}\frac{\log (b_{2k}^{p_k}-b_{2k-1}^{p_k}+\sum_{j=1}^{k-1}(b_{2j}^{p_j}-b_{2j-1}^{p_j}+b_{2j+1}^q-b_{2j}^q))}{\log b_{2k}}\leq\lim_{k\to\infty}\frac{\log (2k-1)(b_{2k}^{p_k}-b_{2k-1}^{p_k})}{\log b_{2k}}\\
&=\lim_{k\to\infty}\frac{\log (2k-1)+\log b_{2k}^{p_k}+\log \Big(1-\frac{b_{2k-1}^{p_k}}{b_{2k}^{p_k}}\Big)}{\log b_{2k}}=\lim_{k\to\infty}\frac{\log b_{2k}^{p_k}}{\log b_{2k}}=p
\end{align*}
(because the second inequality in the second line follows from the fact that $b_{2j}^{p_j}-b_{2j-1}^{p_j}\leq b_{2j}^q-b_{2j-1}^q\leq b_{2k}^{p_k}-b_{2k-1}^{p_k}$ and $b_{2j+1}^q-b_{2j}^q\leq b_{2k}^{p_k}-b_{2k-1}^{p_k}$ for every $j=1, 2,\dots, k-1$, since $\frac{b_{2k-1}^q}{b_{2k}^{p_k}}\downarrow 0$ as $k\to\infty$, and the second equality in the third line comes from the facts that $\lim_{k\to\infty}\frac{b_{2k-1}^{p_k}}{b_{2k}^{p_k}}=0$ and $\lim_{k\to\infty}\frac{\log (2k-1)}{\log b_{2k}}=0$). Hence,
$$\liminf_{n\to\infty}\frac{\log A(n)}{\log n}\leq p<q\leq\limsup_{n\to\infty}\frac{\log A(n)}{\log n}\ .$$
On the other hand,
\[
\limsup_{n\to\infty}\frac{\log A(n)}{\log n}\leq\lim_{n\to\infty}\frac{\log\#\{j\in\NN: j^{\frac{1}{q}}\leq n\}}{\log n}\leq\lim_{n\to\infty}\frac{\log n^q}{\log n}=q
\]
and
\begin{align*}
&\liminf_{n\to\infty}\frac{\log A(n)}{\log n}\geq\lim_{n\to\infty}\frac{\log\#\{j\in\NN: j^{\frac{1}{p}}\leq n\}}{\log n}\geq\lim_{n\to\infty}\frac{\log n^p}{\log n}=p\ .
\end{align*}
for $p>0$ (if $p=0$, then obviously $\liminf_{n\to\infty}\frac{\log A(n)}{\log n}\geq p$). Hence,
$$p\leq\liminf_{n\to\infty}\frac{\log A(n)}{\log n}\leq\limsup_{n\to\infty}\frac{\log A(n)}{\log n}\leq q\ .$$
From (A15) we have as a result, $\lambda(A)=\limsup_{n\to\infty}\frac{\log A(n)}{\log n}=q$ and $\liminf_{n\to\infty}\frac{\log A(n)}{\log n}=p$.}
\end{Example} 

The following lemma shows that there is a strong connection between topological properties of directions sets and multidimensional ratio sets.

\begin{Lemma}\label{directionratio}
Let $k\in\NN$ and $A_1,\ldots,A_{k-1},B\subset\NN$. Then, the following conditions are equivalent:
\begin{itemize}
\item[(i)] $D^k(A_1,\ldots,A_{k-1};B)$ is dense in $\SS_+^{k-1}$;
\item[(ii)] $R^k(A_1,\ldots,A_{k-1};B)$ is dense in $\RR_+^{k-1}$.
\end{itemize}
If additionally $A_1=\cdots=A_{k-1}=B=A$, then the above conditions are equivalent to the following one:
\begin{itemize}
\item[(iii)] $R^k(A)$ is dense in $(0,1)^{k-1}$.
\end{itemize}
\end{Lemma}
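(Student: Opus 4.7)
The approach is to establish $(i)\Leftrightarrow(ii)$ via an explicit homeomorphism between $\SS_+^{k-1}$ and $\RR_+^{k-1}$, and then, under the symmetry hypothesis $A_1=\cdots=A_{k-1}=B=A$, add the equivalence with $(iii)$ by a reindexing trick that exchanges the role of numerator and denominator.

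For $(i)\Leftrightarrow(ii)$, I would introduce $\phi:\SS_+^{k-1}\to\RR_+^{k-1}$ defined by $\phi(x_1,\ldots,x_k)=(x_1/x_k,\ldots,x_{k-1}/x_k)$, with continuous inverse $\phi^{-1}(y_1,\ldots,y_{k-1})=\frac{1}{\sqrt{1+y_1^2+\cdots+y_{k-1}^2}}(y_1,\ldots,y_{k-1},1)$, so that $\phi$ is a homeomorphism. A direct computation shows that $\phi$ sends the element of $D^k(A_1,\ldots,A_{k-1};B)$ coming from $(a_1,\ldots,a_{k-1},a_k)$ (with $s=\sqrt{a_1^2+\cdots+a_k^2}$) to $(a_1/a_k,\ldots,a_{k-1}/a_k)\in R^k(A_1,\ldots,A_{k-1};B)$, and that this correspondence is bijective. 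Since density is preserved under homeomorphism, $(i)$ and $(ii)$ are equivalent.

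Now assume $A_1=\cdots=A_{k-1}=B=A$. The implication $(ii)\Rightarrow(iii)$ is immediate, since $(0,1)^{k-1}$ is an open subset of $\RR_+^{k-1}$, so a dense set in the latter meets every open subset of the former. For $(iii)\Rightarrow(ii)$, I would exploit the symmetry of $R^k(A)$: its elements have the form $(c_1/c_0,\ldots,c_{k-1}/c_0)$ with arbitrary $c_0,c_1,\ldots,c_{k-1}\in A$, so any chosen element can play the role of denominator. Given a target $(y_1,\ldots,y_{k-1})\in\RR_+^{k-1}$ and $\varepsilon>0$, set $y_0:=1$, pick $j^*\in\{0,1,\ldots,k-1\}$ realizing $\max_i y_i$, and apply $(iii)$ to the auxiliary tuple $(y_i/y_{j^*})_{i\neq j^*}\in (0,1]^{k-1}$ (using a small interior perturbation if some coordinate equals $1$). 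Writing $\{i_1<\cdots<i_{k-1}\}=\{0,\ldots,k-1\}\setminus\{j^*\}$, this produces $b,d_1,\ldots,d_{k-1}\in A$ with $d_l/b$ close to $y_{i_l}/y_{j^*}$. If $j^*=0$, the target already lies in $(0,1]^{k-1}$ and we are done. If $j^*\geq 1$, then $i_1=0$, so $d_1/b\approx 1/y_{j^*}$, whence $b/d_1\approx y_{j^*}$, and for $l\geq 2$, $d_l/d_1=(d_l/b)/(d_1/b)\approx y_{i_l}/y_0=y_{i_l}$. Setting $c_0:=d_1$, $c_{j^*}:=b$, and $c_{i_l}:=d_l$ for $l\geq 2$ then yields $(c_1/c_0,\ldots,c_{k-1}/c_0)\in R^k(A)$ close to $(y_1,\ldots,y_{k-1})$.

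The main obstacle is the reindexing step in $(iii)\Rightarrow(ii)$: one must recognize that symmetry of the sets allows an arbitrary reshuffling of the $k$ chosen elements, and then verify that after interchanging the denominator with the element of maximal $y$-coordinate, the approximations collapse into the right pattern. The remaining ingredients---continuity of division away from zero to pass from closeness of the $d_l/b$ to closeness of the final ratio tuple, and an interior perturbation to absorb coordinates on the boundary of $(0,1]^{k-1}$---are routine.
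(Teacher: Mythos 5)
Your proposal is correct and follows essentially the same route as the paper: the equivalence (i)$\Leftrightarrow$(ii) is obtained from the very same homeomorphism $(x_1,\ldots,x_k)\mapsto(x_1/x_k,\ldots,x_{k-1}/x_k)$, and your reindexing step for (iii)$\Rightarrow$(ii) is just a pointwise version of the paper's argument, which uses the involutions $H_j(x_1,\ldots,x_{k-1})=(x_1x_j^{-1},\ldots,x_{j-1}x_j^{-1},x_j^{-1},x_{j+1}x_j^{-1},\ldots,x_{k-1}x_j^{-1})$ preserving $R^k(A)$ and mapping $(0,1)^{k-1}$ onto the regions whose closures cover $\RR_+^{k-1}$. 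The underlying idea --- exchanging the denominator with the element of maximal coordinate --- is identical, so no further comparison is needed.
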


\begin{proof}
We start with the proof of the equivalence i)$\Leftrightarrow$ ii). Consider the mappings
\begin{align*}
F:\SS_+^{k-1}\ni (x_1,\ldots,x_k)\mapsto &\left(\frac{x_1}{x_k},\ldots,\frac{x_{k-1}}{x_k}\right)\in\RR_+^{k-1},\\
G:\RR_+^{k-1}\ni (y_1,\ldots,y_{k-1})\mapsto &\left(\frac{y_1}{\sqrt{y_1^2+\ldots+y_k^2+1}},\ldots,\frac{y_{k-1}}{\sqrt{y_1^2+\ldots+y_k^2+1}},\frac{1}{\sqrt{y_1^2+\ldots+y_k^2+1}}\right)\in\SS_+^{k-1}.
\end{align*}
These mappings are continuous and inverse to each other. Indeed, let us compute \linebreak$(G\circ F)(x_1,\ldots, x_k)$:
\begin{align*}
&(G\circ F)(x_1,\ldots, x_k)=G\left(\frac{x_1}{x_k},\ldots,\frac{x_{k-1}}{x_k}\right)\\
&=\left(\frac{\frac{x_1}{x_k}}{\sqrt{\frac{x_1^2}{x_k^2}+\ldots+\frac{x_{k-1}^2}{x_k^2}+1}},\ldots,\frac{\frac{x_{k-1}}{x_k}}{\sqrt{\frac{x_1^2}{x_k^2}+\ldots+\frac{x_{k-1}^2}{x_k^2}+1}},\frac{1}{\sqrt{\frac{x_1^2}{x_k^2}+\ldots+\frac{x_{k-1}^2}{x_k^2}+1}}\right)\\
&=\left(\frac{x_1}{x_k\ \sqrt{\frac{x_1^2}{x_k^2}+\ldots+\frac{x_{k-1}^2}{x_k^2}+1}},\ldots,\frac{x_{k-1}}{x_k\ \sqrt{\frac{x_1^2}{x_k^2}+\ldots+\frac{x_{k-1}^2}{x_k^2}+1}},\frac{x_k}{x_k\ \sqrt{\frac{x_1^2}{x_k^2}+\ldots+\frac{x_{k-1}^2}{x_k^2}+1}}\right)\\
&=\left(\frac{x_1}{\sqrt{x_1^2+\ldots+x_{k-1}^2+x_k^2}},\ldots,\frac{x_{k-1}}{\sqrt{x_1^2+\ldots+x_{k-1}^2+x_k^2}},\frac{x_k}{\sqrt{x_1^2+\ldots+x_{k-1}^2+x_k^2}}\right)\\
&=(x_1,\ldots,x_{k-1},x_k),
\end{align*}
where in the last equality we use the fact, that $(x_1,\ldots, x_k)\in\SS^{k-1}$. Now, we compute \linebreak$(F\circ G)(y_1,\ldots,y_{k-1})$:
\begin{align*}
&(F\circ G)(y_1,\ldots,y_{k-1})\\
&=F\left(\frac{y_1}{\sqrt{y_1^2+\ldots+y_{k-1}^2+1}},\ldots,\frac{y_{k-1}}{\sqrt{y_1^2+\ldots+y_{k-1}^2+1}},\frac{1}{\sqrt{y_1^2+\ldots+y_{k-1}^2+1}}\right)\\
&=(y_1,\ldots,y_{k-1}).
\end{align*}
Let $a_1\in A_1,\ldots, a_{k-1}\in A_{k-1}, b\in B$. Then,
\begin{align*}
&F\left(\frac{a_1}{\sqrt{a_1^2+\ldots+a_{k-1}^2+b^2}},\ldots,\frac{a_{k-1}}{\sqrt{a_1^2+\ldots+a_{k-1}^2+b^2}},\frac{b}{\sqrt{a_1^2+\ldots+a_{k-1}^2+b^2}}\right)\\
&=\left(\frac{a_1}{b},\ldots,\frac{a_{k-1}}{b}\right).
\end{align*}
Hence, $F:\SS_+^{k-1}\to\RR_+^{k-1}$ is a homeomorphism such that $$F(D^k(A_1,\ldots,A_{k-1};B))=R^k(A_1,\ldots,A_{k-1};B).$$ Thus, the denseness of $D^k(A_1,\ldots,A_{k-1};B)$ in $\SS_+^{k-1}$ is equivalent to the denseness of \linebreak$R^k(A_1,\ldots,A_{k-1};B)$ in $\RR_+^{k-1}$.

From now on we assume that $A_1=\ldots=A_{k-1}=B=A$. Then, the implication ii)$\Rightarrow$ iii) is obvious. It remains to prove iii)$\Rightarrow$ ii). Let us consider the sets $X_j=\{(x_1,\ldots,x_{k-1})\in\RR_+^{k-1}: x_j>1, \forall_{i\neq j}\ x_i<x_j\}$, where $j\in\{1,\ldots,k-1\}$. Then, the mapping
\begin{align*}
H_j:\RR_+^{k-1}\ni (x_1,\ldots,x_{k-1})\mapsto (x_1x_j^{-1},\ldots,x_{j-1}x_j^{-1},x_j^{-1},x_{j+1}x_j^{-1},\ldots,x_{k-1}x_j^{-1})\in\RR_+^{k-1}
\end{align*}
is continuous and involutive. Moreover, $H_j((0,1)^{k-1})=X_j$ and $H_j(R^k(A))=R^k(A)$, as
\begin{align*}
H_j\left(\frac{a_1}{a_k}\ldots,\frac{a_{k-1}}{a_k}\right)=\left(\frac{a_1}{a_j}\ldots,\frac{a_{j-1}}{a_j},\frac{a_k}{a_j},\frac{a_{j+1}}{a_j},\ldots,\frac{a_{k-1}}{a_j}\right).
\end{align*}
Consequently, if $R^k(A)$ is dense in $(0,1)^{k-1}$, then $R^k(A)$ is dense in $X_j$ for each $j\in\{1,\ldots,k-1\}$. This, combined with the facts that $(0,1)^{k-1}$ is dense in $(0,1]^{k-1}$, $X_j$ is dense in the set $Y_j=\{(x_1,\ldots,x_{k-1})\in\RR_+^{k-1}: x_j\geq 1, \forall_{i\neq j}\ x_i\leq x_j\}$, $j\in\{1,\ldots,k-1\}$, and $(0,1]^{k-1}\cup\bigcup_{j=1}^{k-1}Y_j=\RR_+^{k-1}$, gives the denseness of $R^k(A)$ in $\RR_+^{k-1}$.
\end{proof}

Now, we give a list of equivalent conditions to the $(N)$-denseness of a given subset of $\NN$.

\begin{Theorem}\label{Ndense}
Let $A=\{a_1<a_2<\cdots\}\subset\NN$. Then, the following conditions are equivalent:
\begin{itemize}
\item[i)] $\forall\  c>1:\ A(ct)>A(t)\ \textrm{for}\ t\gg 0$;
\item[ii)] $\forall\  c>1:\ \lim_{t\to\infty}(A(ct)-A(t))=\infty$;
\item[iii)] $\forall\ k\geq 2\ \forall\ B\subset\NN\ \textrm{infinite:}\ R^k(A;B)\ \textrm{is dense in}\ \RR_+^{k-1}$;
\item[iv)] $\forall\ k\geq 2\ \forall\ B\subset\NN\ \textrm{infinite:}\ D^k(A;B)\ \textrm{is dense in}\ \SS_+^{k-1}$;
\item[v)] $\exists\ k\geq 2\ \forall\ B\subset\NN\ \textrm{infinite:}\ R^k(A;B)\ \textrm{is dense in}\ \RR_+^{k-1}$;
\item[vi)] $\exists\ k\geq 2\ \forall\ B\subset\NN\ \textrm{infinite:}\ D^k(A;B)\ \textrm{is dense in}\ \SS_+^{k-1}$;
\item[vii)] $\forall\ B\subset\NN\ \textrm{infinite}\ \exists\ k\geq 2:\ R^k(A;B)\ \textrm{is dense in}\ \RR_+^{k-1}$;
\item[viii)] $\forall\ B\subset\NN\ \textrm{infinite}\ \exists\ k\geq 2:\ D^k(A;B)\ \textrm{is dense in}\ \SS_+^{k-1}$;
\item[ix)] $A\ \textrm{is}\ (N)-\textrm{dense}$;
\item[x)] $\lim_{n \to \infty}\frac{a_{n+1}}{a_n}=1$;
\item[xi)] $\lim_{n \to \infty}\frac{1}{a_n}\max\{a_1,a_{i+1}-a_i: i\in\{1,\ldots,n-1\}\}=0$.
\end{itemize}
\end{Theorem}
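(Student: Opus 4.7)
The plan organizes the eleven conditions into two cycles of implications plus the direct equivalences coming from Lemma~\ref{directionratio} and the purely arithmetic pair x)$\Leftrightarrow$xi).

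First I would dispatch x)$\Leftrightarrow$xi). For x)$\Rightarrow$xi): given $\varepsilon>0$, pick $N$ with $(a_{i+1}-a_i)/a_i<\varepsilon$ for $i\geq N$; then $(a_{i+1}-a_i)/a_n<\varepsilon$ for $N\leq i<n$, while the finitely many indices $i<N$ (together with $a_1$) contribute a bounded quantity that vanishes when divided by $a_n\to\infty$. For xi)$\Rightarrow$x), apply xi) at index $n+1$: the difference $a_{n+1}-a_n$ appears in the maximum, so $(a_{n+1}-a_n)/a_{n+1}\to 0$ and hence $a_{n+1}/a_n\to 1$.

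Next I would run the main cycle x)$\Rightarrow$ii)$\Rightarrow$i)$\Rightarrow$ix)$\Rightarrow$x). For x)$\Rightarrow$ii): given $c>1$ and $M\in\NN$, choose $N$ with $a_{n+1}/a_n<c^{1/M}$ for $n\geq N$, so $a_{n+M}/a_n<c$ and whenever $A(t)\geq N$ we get $A(ct)\geq A(t)+M$. The step ii)$\Rightarrow$i) is immediate. For i)$\Rightarrow$ix), fix infinite $B\subset\NN$ and $y,\varepsilon>0$; set $c=1+\varepsilon/(2y)$ and apply i) with $t=yb$ for $b\in B$ sufficiently large to produce $a\in A\cap(yb,cyb]$, so $|a/b-y|<\varepsilon$. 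For ix)$\Rightarrow$x) I argue by contraposition: if $a_{n_k+1}/a_{n_k}\geq 1+\delta$ along a subsequence, take $B=\{a_{n_k+1}:k\in\NN\}$; then for every $a\in A$ and $b=a_{n_k+1}\in B$, either $a\leq a_{n_k}$ (so $a/b\leq 1/(1+\delta)$) or $a\geq b$ (so $a/b\geq 1$), hence $R(A,B)$ misses the whole interval $(1/(1+\delta),1)$ and $A$ is not $(N)$-dense.

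Then I would thread the higher-dimensional conditions via x)$\Rightarrow$iii)$\Rightarrow$v)$\Rightarrow$vii)$\Rightarrow$ix). The key bridge x)$\Rightarrow$iii) uses xi): for $k\geq 2$, infinite $B$, $(y_1,\ldots,y_{k-1})\in\RR_+^{k-1}$, and $\varepsilon>0$, choose $b\in B$ so large that every gap of $A$ below $(\max_i y_i)\,b$ is smaller than $\varepsilon b$; then for each coordinate $i$ there is an element $a_i\in A$ within $\varepsilon b$ of $y_ib$. The implications iii)$\Rightarrow$v) and v)$\Rightarrow$vii) are mere weakenings of quantifier alternation. For vii)$\Rightarrow$ix), project $R^k(A;B)$ onto its first coordinate: the image equals $R(A,B)$ (the remaining coordinates can be filled in arbitrarily from $A$, which is infinite), and continuous surjections send dense sets to dense sets. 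Finally, the equivalences iii)$\Leftrightarrow$iv), v)$\Leftrightarrow$vi), vii)$\Leftrightarrow$viii) are immediate from Lemma~\ref{directionratio}.

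The main obstacle I expect is ix)$\Rightarrow$x): one must exhibit a concrete witness set $B$ showing that $A$ fails to be $(N)$-dense whenever the small-gap condition breaks, and the choice $B=\{a_{n_k+1}\}$ is what causes the ratios $a/b$ to skip an entire interval. Everything else reduces to routine quantifier manipulation, a single projection, the gap lemma x)$\Leftrightarrow$xi), or a direct invocation of Lemma~\ref{directionratio}.
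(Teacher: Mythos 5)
Your proof is correct, and while it shares the paper's main ingredients (a cycle of implications, a contraposition with an explicit witness set $B$, the projection argument for vii)$\Rightarrow$ix), and Lemma \ref{directionratio} for the direction-set equivalences), the decomposition is genuinely different in several places. The paper closes its cycle as i)$\Rightarrow$iii)$\Rightarrow$v)$\Rightarrow$vii)$\Rightarrow$ix)$\Rightarrow$i), proving ix)$\Rightarrow$i) by contraposition with $B=\{\lfloor x_n\rfloor:n\geq N\}$ built from real points $x_n$ where $A(c_0x_n)=A(x_n)$ (which forces some care with floors), and it imports x)$\Rightarrow$ix) from the literature as (A21). You instead run x)$\Rightarrow$ii)$\Rightarrow$i)$\Rightarrow$ix)$\Rightarrow$x), proving ix)$\Rightarrow$x) by contraposition with the cleaner witness $B=\{a_{n_k+1}\}$ taken directly from a subsequence where $a_{n_k+1}/a_{n_k}\geq 1+\delta$ (so the excluded interval $(1/(1+\delta),1)$ falls out immediately), and you prove i)$\Rightarrow$ix) directly in one dimension rather than through the $k$-dimensional conditions; this makes your argument self-contained, with no appeal to (A21). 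Your bridge x)$\Rightarrow$iii) goes through the gap condition xi) rather than the counting-function condition i) as in the paper; both work, though yours needs the small extra observation that the first element of $A$ exceeding $(\max_i y_i)b$ is at most a bounded multiple of it (which x) supplies). The net effect is the same theorem with a slightly more economical and self-contained proof; the paper's route has the advantage of exhibiting i)$\Rightarrow$iii) and i)$\Rightarrow$x) as standalone implications that it reuses in its summary chains.
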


\begin{proof}
In order to show the implication i)$\Rightarrow$ ii) it suffices to show that for each $c>1$ and $k\in\NN$ there exists $t_{c,k}$ such that $A(ct)-A(t)\geq k$ for each $t\geq t_{c,k}$. Indeed, let $t_{c,k}$ be such that $A(\sqrt[k]{c}t)>A(t)$ for $t\geq t_{c,k}$. Then
$$A(ct)-A(t)=\sum_{j=1}^k \left(A\left(\sqrt[k]{c}^jt\right)-A\left(\sqrt[k]{c}^{j-1}t\right)\right)\geq k,$$
which was to prove.

The implication ii)$\Rightarrow$ i) is trivial.

For the proof of the implication i)$\Rightarrow$ iii) let us fix $k\geq 2$, an infinite subset $B$ of $\NN$ and $\alpha_1,\beta_1\ldots,\alpha_{k-1},\beta_{k-1}\in\RR_+$ such that $\alpha_j<\beta_j$ for each $j\in\{1,\ldots,k-1\}$. By i), for sufficiently large $t>0$ we have $A(\beta_jt)>A(\alpha_jt)$ for each $j\in\{1,\ldots,k-1\}$. Since $B\subset\NN$ is infinite, we can choose $b\in B$ so large that $A(\beta_jb)>A(\alpha_jb)$ for each $j\in\{1,\ldots,k-1\}$. This means that  for each $j\in\{1,\ldots,k-1\}$ there exists an $a_j\in A$ such that $\alpha_jb<a_j\leq\beta_jb$, equivalently $\alpha_j<\frac{a_j}{b}\leq\beta_j$. Hence, $\left(\frac{a_1}{b},\ldots,\frac{a_{k-1}}{b}\right)\in R^k(A;B)\cap (\alpha_1,\beta_1]\times\ldots\times (\alpha_{k-1},\beta_{k-1}]$. Because $\alpha_1,\beta_1\ldots,\alpha_{k-1},\beta_{k-1}$ are chosen arbitrarily, we conclude that the set $R^k(A;B)$ is dense in $\RR_+^{k-1}$.

The equivalences iii)$\Leftrightarrow$ iv), v)$\Leftrightarrow$ vi) and vii)$\Leftrightarrow$ viii) follow directly from Lemma \ref{directionratio}.

The implications iii)$\Rightarrow$ v) and v)$\Rightarrow$ vii) are obvious.

The implication vii)$\Rightarrow$ ix) follows from the facts that $R(A,B)$ is an image of $R^k(A;B)$ via the projection $\RR_+^{k-1}\ni (x_1,\ldots,x_{k-1})\mapsto x_1\in\RR_+$ and an image of a dense subset via continuous surjection is a dense subset of a codomain.

We prove the implication ix)$\Rightarrow$ i) by contraposition. Assume that there exist a $c_0>1$ and a strictly increasing sequence $(x_n)_{n\in\NN}$ of positive numbers such that $A(c_0x_n)=A(x_n)$ for each $n\in\NN$. Then, take $N\in\NN$ such that $\frac{x_N}{x_N-1}<c_0$ and put $c_1=\frac{x_N}{x_N-1}$. Thus, for each $n\geq N$ we have $\frac{x_n}{x_n-1}\leq c_1$. This means that
$$x_n\leq c_1(x_n-1)<c_1\lfloor x_n\rfloor<c_0\lfloor x_n\rfloor\leq c_0x_n, \quad n\geq N.$$
Hence, $A(c_1\lfloor x_n\rfloor)=A(c_0\lfloor x_n\rfloor)$ for $n\geq N$. Let us consider the set $B=\{\lfloor x_n\rfloor : n\geq N\}$. If $R(A,B)\cap (c_1,c_0)\neq\varnothing$, then there are some $a\in A$ and $n\geq N$ such that $c_1<\frac{a}{\lfloor x_n\rfloor}<c_0$, or equivalently $c_1\lfloor x_n\rfloor<a<c_0\lfloor x_n\rfloor$ - this contradicts with the fact that $A(c_1\lfloor x_n\rfloor)=A(c_0\lfloor x_n\rfloor)$ for $n\geq N$. Hence, $R(A,B)$ is not dense in $\RR_+$ and, as a result, $A$ is not (N)-dense.

The implication x)$\Rightarrow$ ix) is exactly the fact (A21).

For the proof of the implication i)$\Rightarrow$ x) it suffices to show that for each $\varepsilon>0$ and sufficiently large $n\in\NN$ we have $\frac{a_{n+1}}{a_n}\leq 1+\varepsilon$. Let us fix $\varepsilon>0$. By i), there exists a constant $M>0$ such that $A((1+\varepsilon)t)>A(t)$ holds for every $t\geq M$. As a consequence, if $a_n\geq M$, then $A((1+\varepsilon)a_n)>A(a_n)$, which means that $a_{n+1}\leq (1+\varepsilon)a_n$, equivalently $\frac{a_{n+1}}{a_n}\leq 1+\varepsilon$. This was to prove.

We show the implication x)$\Rightarrow$ xi). For the convenience of notation we put $a_0=0$. Then, for each $n\in\NN$ there exists $i(n)\in\{0,1,\ldots,n-1\}$ such that $\max\{a_{i+1}-a_i: i\in\{0,1,\ldots,n-1\}\}=a_{i(n)+1}-a_{i(n)}$. If the sequence $(i(n))_{n\in\NN}$ is bounded, then the value of $a_{i(n)+1}-a_{i(n)}$ is bounded and
$$\lim_{n\to\infty}\frac{1}{a_n}\max\{a_1,a_{i+1}-a_i: i\in\{1,\ldots,n-1\}\}=\lim_{n\to\infty}\frac{a_{i(n)+1}-a_{i(n)}}{a_n}=0$$
as $\lim_{n\to\infty}a_n=\infty$. If the sequence $(i(n))_{n\in\NN}$ is unbounded, then we write the following chain of inequalities.
\begin{align*}
0<\frac{1}{a_n}\max\{a_{i+1}-a_i: i\in\{0,\ldots,n-1\}\}=\frac{a_{i(n)+1}-a_{i(n)}}{a_n}=\frac{a_{i(n)+1}-a_{i(n)}}{a_{i(n)}}\frac{a_{i(n)}}{a_n}<\frac{a_{i(n)+1}}{a_{i(n)}}-1,
\end{align*}
where $a_0=0$. By x), the right-most expression in the above chain tends to $0$ as $n\to\infty$. Therefore, $\frac{1}{a_n}\max\{a_1,a_{i+1}-a_i: i\in\{1,\ldots,n-1\}\}\to 0$ as $n\to\infty$.

We are left with the proof of the implication xi)$\Rightarrow$ x). We write
\begin{align*}
0<\frac{a_n-a_{n-1}}{a_n}\leq\frac{1}{a_n}\max\{a_1,a_{i+1}-a_i: i\in\{1,\ldots,n-1\}\}.
\end{align*}
By x), the right-most expression in the above double inequality tends to $0$ as $n\to\infty$. Thus, $\frac{a_n-a_{n-1}}{a_n}=1-\frac{a_{n-1}}{a_n}\to 0$ as $n\to\infty$. Hence, $\lim_{n\to\infty}\frac{a_{n-1}}{a_n}=1$, which is equivalent to the limit $\lim_{n\to\infty}\frac{a_{n+1}}{a_n}=1$.
\end{proof}

\begin{Remark}
{\rm The reasoning like in the proof of the implication vii)$\imp$ ix), in view of Lemma \ref{directionratio}, gives another proof of the fact that if $k<l$, then the denseness of $D^l(A)$ in $\SS_+^{l-1}$ implies the denseness of $D^k(A)$ in $\SS_+^{k-1}$.}
\end{Remark}

At this moment we give a generalization of the last implication in (A22).

\begin{Theorem}\label{1/k}
Let $k\in\NN$, $k\geq 2$, and $A\subset\NN$. Assume that the set $R^k(A)$ is dense in $\RR_+^{k-1}$. Then, $\underline{D}(A)\leq\frac{1}{k}$.
\end{Theorem}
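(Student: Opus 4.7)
The plan is to exploit the density of $R^k(A)$ at the specific point $\left(\frac{1}{k},\frac{2}{k},\ldots,\frac{k-1}{k}\right)\in\RR_+^{k-1}$, which geometrically divides the unit interval into $k$ equal parts. By density, for every $\varepsilon>0$ we can choose elements $a_{i_1},\ldots,a_{i_{k-1}},b\in A$ with
$$\left|\frac{a_{i_j}}{b}-\frac{j}{k}\right|<\varepsilon,\quad j=1,\ldots,k-1.$$
For $\varepsilon$ small enough this forces $a_{i_1}<a_{i_2}<\cdots<a_{i_{k-1}}<b$. Crucially, since any nonempty open subset $U$ of $\RR_+^{k-1}$ contains infinitely many points of $R^k(A)$, and only finitely many such tuples have a bounded denominator $b$, we may choose $b=a_n$ with $n$ arbitrarily large.

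Next, I would use that the selected elements partition the initial segment of $A$. Setting $a_{i_0}:=0$, $a_{i_k}:=b=a_n$, the intervals $[a_{i_{j}},a_{i_{j+1}}]$ for $j=0,\ldots,k-1$ cover $[0,a_n]$. Because $a_{i_1},\ldots,a_{i_{k-1}}\in A$, no consecutive gap $a_{m+1}-a_m$ (with $m+1\leq n$) can straddle any of the interior division points; hence each such gap, as well as $a_1$, is bounded by the length of the interval $[a_{i_j},a_{i_{j+1}}]$ containing it. Therefore
$$\max\{a_1,a_{m+1}-a_m:m\in\{1,\ldots,n-1\}\}\leq\max_{0\leq j\leq k-1}(a_{i_{j+1}}-a_{i_j}).$$
By the approximation, each such length equals $\left(\frac{1}{k}+\eta_{j+1}-\eta_j\right)b$ with $|\eta_{j+1}-\eta_j|<2\varepsilon$, so the right hand side is at most $\left(\frac{1}{k}+2\varepsilon\right)a_n$.

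Dividing by $a_n$ gives $\frac{1}{a_n}\max\{a_1,a_{m+1}-a_m:m<n\}\leq\frac{1}{k}+2\varepsilon$. Since $n$ can be chosen arbitrarily large, this estimate is realized along a sequence of indices tending to infinity, which yields $\underline{D}(A)\leq\frac{1}{k}+2\varepsilon$; letting $\varepsilon\to 0^+$ finishes the proof. The only delicate point — and the step I would state carefully — is the combination of two facts: (i) that the density of $R^k(A)$ on an open set forces infinitely many admissible denominators $b$, so the bound feeds into a genuine liminf; and (ii) that the chosen points $a_{i_j}\in A$ prevent any consecutive gap of $A$ from spanning two adjacent sub-intervals, reducing a global gap estimate to a local one. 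Everything else is routine estimation.
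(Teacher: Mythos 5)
Your proof is correct and follows essentially the same route as the paper: approximate the equipartition point $\left(\frac 1k,\ldots,\frac{k-1}{k}\right)$ by elements of $R^k(A)$ with arbitrarily large denominators, observe that the chosen elements of $A$ prevent any consecutive gap below the denominator from straddling a division point, and pass to the liminf along that subsequence. The two delicate points you flag (unbounded denominators and the reduction of the global gap bound to the sub-interval lengths) are exactly the steps the paper's shorter argument leaves implicit, so nothing is missing.
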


\begin{proof}
Let us write $A=\{a_1<a_2<\cdots\}$. Since $R^k(A)$ is dense in $\RR_+^{k-1}$, there exists a sequence $((a_1^{(n)},\ldots,a_k^{(n)}))_{n\in\NN}$ such that $a_j^{(n)}\in A$ for each $j\in\{1,\ldots,k\}$ and $n\in\NN$, and $\lim_{n\to\infty}\frac{a_j^{(n)}}{a_k^{(n)}}=\frac{j}{k}$ for every $j\in\{1,\ldots,k-1\}$. Then,
$$\max_{t\in\NN:a_t\leq a_k^{(n)}}\frac{a_t-a_{t-1}}{a_k^{(n)}}\leq \max_{1\leq j\leq k}\frac{a_j^{(n)}-a_{j-1}^{(n)}}{a_k^{(n)}}\to\frac{1}{k},\quad n\to\infty,$$
where we put $a_0=0$ and $a_0^{(n)}=0$, $n\in\NN$. As a result, $$\underline{D}(A)=\liminf_{s\to\infty}\max_{1\leq t\leq s}\frac{a_t-a_{t-1}}{a_s}\leq\frac{1}{k}.$$
\end{proof}

With the use of the above theorem and Lemma \ref{directionratio} we are able to give a topological characterization of having dispersion equal to zero.

\begin{Theorem}\label{D0}
Let $A\subset\NN$. Then, $\underline{D}(A)=0$ if and only if for each positive integer $k\geq 2$ the set $R^k(A)$ is dense in $\RR_+^{k-1}$.
\end{Theorem}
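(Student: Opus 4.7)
The plan is to prove the two implications separately. The backward direction follows immediately from Theorem~\ref{1/k}: if $R^k(A)$ is dense in $\RR_+^{k-1}$ for every integer $k\geq 2$, then that theorem gives $\underline{D}(A)\leq \frac{1}{k}$ for every $k\geq 2$, which forces $\underline{D}(A)=0$.

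For the forward direction, I would assume $\underline{D}(A)=0$. Unwinding the definition of $\underline{D}$, there exists a subsequence $(n_j)$ of positive integers such that $\varepsilon_j:=\frac{1}{a_{n_j}}\max\{a_1,a_{i+1}-a_i: i\in\{1,\ldots,n_j-1\}\}\to 0$. The key reformulation I want to use is: for each such $j$, every point of $[0,a_{n_j}]$ lies within distance $\varepsilon_j a_{n_j}$ of some element of $\{a_1,\ldots,a_{n_j}\}$, since the initial gap from $0$ to $a_1$ together with all consecutive differences $a_{i+1}-a_i$ up to index $n_j-1$ are bounded by $\varepsilon_j a_{n_j}$.

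Fix $k\geq 2$ and a target $(\alpha_1,\ldots,\alpha_{k-1})\in\RR_+^{k-1}$ that I want to approximate by a point of $R^k(A)$. Set $M=\max(\alpha_1,\ldots,\alpha_{k-1})$ and $L=2\max(M,1)$. For $j$ so large that $\varepsilon_j<\frac{1}{2L}$, I first choose $b\in\{a_1,\ldots,a_{n_j}\}$ within $\varepsilon_j a_{n_j}$ of $\frac{a_{n_j}}{L}$; the choice of $L$ guarantees both $b>\frac{a_{n_j}}{2L}>0$ and $\alpha_i b<a_{n_j}$ for every $i\in\{1,\ldots,k-1\}$. Applying the density statement once more, I choose $b_i\in\{a_1,\ldots,a_{n_j}\}$ within $\varepsilon_j a_{n_j}$ of $\alpha_i b$. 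A direct computation then yields $|b_i/b-\alpha_i|\leq \varepsilon_j a_{n_j}/b<2L\varepsilon_j$ uniformly in $i$, and this upper bound vanishes as $j\to\infty$, showing that $(\alpha_1,\ldots,\alpha_{k-1})$ is an accumulation point of $R^k(A)$.

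The main obstacle is the joint bookkeeping for the choice of the denominator $b$: it must be small enough that all the targets $\alpha_i b$ still lie in $[0,a_{n_j}]$, where the dispersion bound gives us approximation, yet simultaneously bounded below by a fixed constant multiple of $a_{n_j}$ so that the relative error $\varepsilon_j a_{n_j}/b$ remains controlled. The factor $2$ inside $L=2\max(M,1)$ is precisely what provides the required slack for both constraints at once.
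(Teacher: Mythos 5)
Your proof is correct. The backward direction coincides with the paper's: both apply Theorem~\ref{1/k} for every $k$ and let $k\to\infty$. For the forward direction your route differs slightly in structure from the paper's. The paper first invokes Lemma~\ref{directionratio} to reduce the claim to density of $R^k(A)$ in $(0,1)^{k-1}$, after which it can take the denominator to be $a_n$ itself and read off elements $a_{t_j}$ with $\bigl|\frac{a_{t_j}}{a_n}-x_j\bigr|<\varepsilon$ directly from the small-gaps condition. You instead prove density in all of $\RR_+^{k-1}$ without the reduction, which forces the extra bookkeeping you describe: the denominator $b$ must be pushed down to roughly $a_{n_j}/L$ so that every target $\alpha_i b$ stays inside $[0,a_{n_j}]$ (where the gap bound applies), while remaining bounded below by $a_{n_j}/(2L)$ so that the relative error $\varepsilon_j a_{n_j}/b\leq 2L\varepsilon_j$ still vanishes. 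Both arguments rest on the same observation that $\underline{D}(A)=0$ makes $\{a_1,\ldots,a_{n_j}\}$ an $\varepsilon_j a_{n_j}$-net of $[0,a_{n_j}]$ along a subsequence; the paper's version is shorter because it outsources the passage from the unit cube to $\RR_+^{k-1}$ to the homeomorphism lemma, whereas yours is self-contained for this direction. Your estimates all check out ($b>a_{n_j}/(2L)$ and $\alpha_i b\leq\frac{3M}{2L}a_{n_j}<a_{n_j}$ follow from $\varepsilon_j<\frac{1}{2L}$ and $L=2\max(M,1)$), so either presentation is acceptable.
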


\begin{proof}
Firstly, we assume that $\underline{D}(A)=0$. For the proof of the implication $\Rightarrow$, by Lemma \ref{directionratio} it suffices to prove that for each positive integer $k\geq 2$ the set $R^k(A)$ is dense in $(0,1)^{k-1}$. Let $A=\{a_1<a_2<\cdots\}$ and put $a_0=0$. Choose any $x_1,\ldots,x_{k-1}\in (0,1)$ and $\varepsilon>0$. Since $\underline{D}(A)=0$, we can choose an $n\in\NN$ such that $\max_{1\leq t\leq n}\frac{a_t-a_{t-1}}{a_n}<\varepsilon$. Then, there exist $t_1,\ldots,t_{k-1}$ such that $\left|\frac{a_{t_j}}{a_n}-x_j\right|<\varepsilon$ for each $j\in\{1,\ldots,k-1\}$, which ends the proof.

The proof of the implication $\Leftarrow$ follows directly from Theorem \ref{1/k}.
\end{proof}

\begin{Remark}
{\rm The equivalence x)$\Leftrightarrow$ xi) in Theorem \ref{Ndense} combined with the implication $\Rightarrow$ in Theorem \ref{D0} gives another proof of (A24).}
\end{Remark}

\begin{Remark}\label{D0notimpNdense}
{\rm Let us notice that the implication in (A24) cannot be inversed. It suffices to construct a set $A=\{a_1<a_2<\cdots\}\subset\NN$ such that $\underline{D}(A)=0$ but the $\limsup_{n\to\infty}\frac{a_{n+1}}{a_n}>1$. Then, by Theorem \ref{D0} the set $D^k(A)$ is dense in $\SS_+^{k-1}$ for each $k\geq 2$.

Let us consider the set $A=\NN\cap\bigcup_{k=1}^{\infty}[(2k)!,(2k+1)!]$. Then,
$$\underline{D}(A)=\lim_{k\to\infty}\frac{(2k)!-(2k-1)!)}{(2k+1)!}=\lim_{k\to\infty}\frac{1}{2k+1}-\frac{1}{2k(2k+1)}=0.$$
On the other hand,
$$\limsup_{n\to\infty}\frac{a_{n+1}}{a_n}=\lim_{k\to\infty}\frac{(2k)!}{(2k-1)!}=\lim_{k\to\infty}2k=\infty.$$}
\end{Remark}

Our next result shows that Theorem \ref{1/k} is the only dependence between the denseness of generalized ratio sets of a given subset of $\NN$ and dispersion of this subset. Moreover, we can construct a subset $A$ of positive integers with $R^k(A)$ dense in $\RR_+^{k-1}$, $R^{k+1}(A)$ not dense in $\RR_+^k$ and arbitrarily prescribed dispersion $d\in\left(0,\frac{1}{k}\right]$ and the value of $\lim_{t\to\infty}\frac{\log A(t)}{\log t}\in [0,1]$.

\begin{Theorem}\label{arbitrarydispersion}
For each positive integer $k\in\NN$ and real numbers $d\in\left(0,\frac{1}{k}\right]$ and $\lambda\in [0,1]$ there exists a set $A\subset\NN$ such that $R^k(A)$ is dense in $\RR_+^{k-1}$, $R^{k+1}(A)$ is not dense in $\RR_+^k$, $\underline{D}(A)=d$ and $\lim_{t\to\infty}\frac{\log A(t)}{\log t}=\lambda$ (in case of $k=1$ we understand the condition ''$R^k(A)$ is dense in $\RR_+^{k-1}$'' as an empty fulfilled one).
\end{Theorem}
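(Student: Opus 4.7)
I would construct $A$ as a union of blocks at geometrically spaced scales, with each block carefully designed to encode the four required properties simultaneously. For the case $k=1$ (in which the $R^k$-density condition is vacuous), take $A=\bigcup_m B_m$ with $B_m=\{b_m-w_m+1,\ldots,b_m\}$, $b_m=\lceil(1/(1-d))^m\rceil$, and $w_m=\lfloor b_m^\lambda\rfloor$ (with obvious adjustments for $\lambda\in\{0,1\}$). A direct verification gives $\underline D(A)=d$ (the inter-block gap at scale $b_m$ has relative size tending to $1-1/r=d$), $R^2(A)=R(A)$ is not dense (ratios accumulate on the countable set $\{(1/(1-d))^j:j\in\mathbb{Z}\}$), and $\log A(t)/\log t\to\lambda$.

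For $k\geq 2$, each block $B_m$ will consist of $k$ tight clusters of $w_m$ consecutive integers, located at positions $\beta_l^{(m)} b_m$ with $\beta_k^{(m)}=1$ and $0<\beta_1^{(m)}<\cdots<\beta_{k-1}^{(m)}<1$. The key pigeonhole observation for $R^{k+1}(A)$ is this: any tuple $(a_1/b,\ldots,a_k/b)$ with all coordinates in $(\eta,1-\eta)$ for small $\eta>0$ must come from $a_1,\ldots,a_k,b$ all lying in one block (cross-scale ratios fall outside that range once the scales grow fast enough), but inside a single block each ratio $a_i/b$ takes one of at most $k-1$ distinct values $\beta_l^{(m)}/\beta_{l_b}^{(m)}$, so by pigeonhole at least two of the $k$ coordinates must coincide, preventing approximation of any target with $k$ distinct, well-separated coordinates. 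Hence $R^{k+1}(A)$ is not dense. For $R^k(A)$-density, the position vectors $(\beta_1^{(m)},\ldots,\beta_{k-1}^{(m)})$ will be distributed densely in an appropriate subset of the ordered simplex, and cross-scale ratios (controlled by the growth factors $b_m/b_{m-1}$) are used to reach coordinate regions not covered by same-scale ratios; the equivalence between density in $\RR_+^{k-1}$ and in $(0,1)^{k-1}$ provided by Lemma \ref{directionratio} is used throughout.

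The dispersion constraint $\underline D(A)=d$ is enforced by choosing, along a sparse ``tight'' subsequence of indices $m$, the extremal configuration $\beta_l^{(m)}=(1-d)^{k-l}$ together with $b_m/b_{m-1}=(1/(1-d))^k$: at every cluster start $a_n=\beta_l^{(m)} b_m$ in such a block, the largest gap preceding $a_n$ equals $d\cdot\beta_l^{(m)} b_m$ up to lower-order corrections, giving $f(n)\to d$ and hence $\liminf_n f(n)=d$. The cluster widths $w_m$ are tuned to produce the required exponent: $w_m\sim b_m^\lambda$ gives $\log A(t)/\log t\to\lambda$ for $\lambda\in(0,1)$, with obvious modifications at the endpoints $\lambda=0$ and $\lambda=1$ (taking $w_m=1$ and $w_m\sim b_m/(\log b_m)^2$ respectively).

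The main obstacle is reconciling the dispersion constraint with the requirement that $R^k(A)$ be dense everywhere in $\RR_+^{k-1}$: the dispersion bound forces the same-scale $\beta$-ratios to lie in a restricted interval (roughly $((1-d)^{k-1},1)$), so one must reach the remaining coordinate regions via cross-scale pairs. This requires choosing the growth factors $b_m/b_{m-1}$ to be multiplicatively dense in a suitable range $[(1/(1-d))^k,\infty)$, and then verifying that the resulting cross-scale ratios fill in the missing parts of $\RR_+^{k-1}$ without spoiling the pigeonhole argument for the non-density of $R^{k+1}(A)$. Once this compatibility is checked, all four properties follow from the explicit calculations outlined above.
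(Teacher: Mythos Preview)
Your construction differs from the paper's, which separates the four requirements into three disjoint families at factorially spaced scales: sets $C_n\subset[(4n-2)!,(4n-1)!]$ consisting of exactly $k$ points encoding a prescribed rational $(k-1)$-tuple (these alone give $R^k$-density), geometric progressions $D_n\subset((4n-1)!,(4n)!]$ with ratio $1/(1-d)$ (these alone control dispersion), and sets $B_n=\{\lceil j^{1/\lambda}\rceil\}$ in a short window near $(4n-3)!$ (these alone fix the exponent). The factorial gaps make cross-family ratios tend to $0$ or $\infty$, so $R^{k+1}$-non-density is obtained by a short case analysis on which family the tuple comes from, not by a pigeonhole count.

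Your plan has a genuine gap precisely where you flag the ``main obstacle''. The pigeonhole argument for $R^{k+1}$-non-density rests on the claim that a tuple with all coordinates in $(\eta,1-\eta)$ forces $a_1,\dots,a_k,b$ into a single block, i.e.\ that cross-block ratios avoid $(\eta,1-\eta)$. But your dispersion mechanism requires growth factor $b_m/b_{m-1}=(1-d)^{-k}$ along the tight subsequence, and for small $d$ this is arbitrarily close to $1$, so adjacent tight blocks produce cross-block ratios well inside $(\eta,1-\eta)$ for any fixed $\eta$. Worse, you explicitly rely on cross-scale ratios to make $R^k$ dense below $(1-d)^{k-1}$, yet offer no argument that those same cross-scale contributions do not fill in $R^{k+1}$: with $\beta$-vectors dense in a region of the simplex and growth factors multiplicatively dense in $[(1-d)^{-k},\infty)$, taking $a_1,a_2$ from one block and $b$ from a later one already yields two independently varying small coordinates, and it is not clear what open set in $(0,1)^k$ is then missed. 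The paper sidesteps all of this by never needing cross-scale ratios for $R^k$-density (each $C_n$ already contains an exact rational tuple) and by making every cross-family ratio degenerate. Two smaller points: your formula $b_m=\lceil(1-d)^{-m}\rceil$ for $k=1$ is undefined at $d=1$, which lies in $(0,1/k]$; and for $\liminf f(n)=d$ at a tight index $m$ you need the largest \emph{earlier} gap to be $\le d\,b_m$, which fails if the preceding block had a large growth factor --- your sketch does not arrange the run-in to a tight block that this requires.
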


\begin{proof}
Let us fix $k\in\NN$, $k\geq 2$, $d\in\left(0,\frac{1}{k}\right]$ and $\lambda\in [0,1]$. Enumerate all the elements of the set $\{(x_1,\ldots,x_{k-1})\in (0,1)\cap\QQ^{k-1}: x_1<x_2<\ldots<x_{k-1}\}=\{(q_1^{(n)},\ldots,q_{k-1}^{(n)}): n\in\NN\}$ such that for each $l\in\{1,\ldots ,k-1\}$ the value of $(4n-1)!\cdot q_l^{(n)}$ is an integer at least equal to $(4n-2)!$. Construct three families of subsets of $\NN$:
\begin{align*}
B_n=&\ \left\{\left\lceil j^{\frac{1}{\lambda}}\right\rceil :\ j\in\NN\right\}\cap \left(\max\{(4n-4)!,d\cdot (4n-3)!\},\frac{2d}{2-d}\cdot (4n-3)!\right],\\
C_n=&\ \{(4n-1)!\cdot q_1^{(n)},\ldots ,(4n-1)!\cdot q_{k-1}^{(n)}, (4n-1)!\},\\
D_n=&\ \left\{\left\lceil \frac{(4n-1)!}{(1-d)^j}\right\rceil :\ j\in\NN\right\}\cap ((4n-1)!,(4n)!],
\end{align*}
where $n\in\NN$ (if $d=1$, then we take $D_n=\varnothing$; if $\lambda=0$, then we put $B_n=\varnothing$). Then we define
$$A=\bigcup_{n\in\NN}(B_n\cup C_n\cup D_n).$$

From the very definition of $C_n$ we know that $(q_1^{(n)},\ldots,q_{k-1}^{(n)})\in R^k(A)$. Additionally, each $(k-1)$-tuple of rational numbers from the interval $(0,1)$ can be attained as $(q_{\sigma(1)}^{(n)},\ldots,q_{\sigma(k-1)}^{(n)})$ for some $\sigma:\{1,\ldots,k-1\}\to\{1,\ldots,k-1\}$ (not necessarily bijective). Thus $R^k(A)$ is dense in $(0,1)^{k-1}$. By Lemma \ref{directionratio}, $R^k(A)$ is dense in $\RR_+^{k-1}$.

Assume by contrary that $R^{k+1}(A)$ is dense in $\RR_+^k$. Let $(r_1,\ldots,r_k)\in (0,1)^k$ and $a_l^{(n)}\in A$, $n\in\NN$, $l\in\{1,\ldots,k+1\}$, be such that $\lim_{n\to\infty}\frac{a_l^{(n)}}{a_{k+1}^{(n)}}=r_l$ for each $l\in\{1,\ldots,k\}$. For sufficiently large $n\in\NN$ we have $a_1^{(n)}<\cdots<a_k^{(n)}<a_{k+1}^{(n)}$. If for infinitely many $n$ we have $a_1^{(n)},\ldots ,a_k^{(n)}, a_{k+1}^{(n)}\in B_m$ for some $m\in\NN$, then
$$r_l=\lim_{n\to\infty}\frac{a_l^{(n)}}{a_{k+1}^{(n)}}\geq 1-\frac{d}{2},\ l\in\{1,\ldots ,k\}.$$
Otherwise, by the assumption on enumeration of the $k-1$-tuples $(q_1^{(n)},\ldots,q_{k-1}^{(n)})$ and from the definition of the sets $D_n$ we have
$$r_1=\lim_{n\to\infty}\frac{a_1^{(n)}}{a_{k+1}^{(n)}}\leq 1-d.$$ This means that all the accumulation points of $R^{k+1}(A)\cap (0,1)^k$ have all the coordinates at least equal to $1-\frac{d}{2}$ or at least one coordinate at most equal to $1-d$. This means that $R^{k+1}(A)$ is not dense in $(0,1)^k$.

At this moment we compute $\underline{D}(A)$. Write $A=\{a_1<a_2<a_3<\ldots \}$ and set $a_0=0$. Consider several cases for the value of $\frac{1}{a_t}\max_{1\leq j\leq t}(a_j-a_{j-1})$.
\begin{itemize}
\item If $a_t\in B_n$ for sufficiently large $n\in\NN$, then
$$\frac{1}{a_t}\max_{1\leq j\leq j_n}(a_j-a_{j-1})>\frac{d\cdot (4n-3)!-(4n-4)!}{\frac{2d}{2-d}\cdot (4n-3)!}=1-\frac{d}{2}-\frac{2-d}{2d(4n-3)}\to 1-\frac{d}{2}>d\ \textrm{as}\ n\to\infty.$$
\item If $a_t=(4n-1)!\cdot q_l^{(n)}$, $n\geq 2$, $l\in\{1,\ldots,k-1\}$, then
\begin{align*}
&\frac{1}{a_t}\max_{1\leq j\leq j_n+l}(a_j-a_{j-1})\geq\frac{1}{a_t}\max_{1\leq s\leq l}(a_{t-l+s}-a_{t-l+s-1})\geq\frac{1}{a_t}\frac{a_t-a_{t-l}}{l}\\
&>\frac{1}{(4n-1)!\cdot q_l^{(n)}}\frac{(4n-1)!\cdot q_l^{(n)}-(4n-3)!}{l}=\frac{1-\frac{1}{(4n-2)(4n-1)\cdot q_l^{(n)}}}{l}\to\frac{1}{l}>\frac{1}{k}\ \textrm{as}\ n\to\infty,
\end{align*}
where the convergence $\frac{1}{(4n-2)(4n-1)\cdot q_l^{(n)}}\to 0$ in the second line follows from the assumption that $(4n-1)!\cdot q_l^{(n)}\geq (4n-2)!$.
\item If $a_t=(4n-1)!$, $n\in\NN$, then
\begin{align*}
&\frac{1}{a_t}\max_{1\leq j\leq j_n+l}(a_j-a_{j-1})\geq\frac{1}{a_t}\max_{1\leq s\leq k}(a_{t-k+s}-a_{t-k+s-1})\geq\frac{1}{a_t}\frac{a_t-a_{t-k}}{k}\\
&>\frac{1}{(4n-1)!}\frac{(4n-1)!-(4n-3)!}{k}=\frac{1-\frac{1}{(4n-2)(4n-1)}}{k}\to\frac{1}{k}\ \textrm{as}\ n\to\infty.
\end{align*}
\item If $a_t=\left\lceil\frac{(4n-1)!}{(1-d)^j}\right\rceil\in D_n$, $j,n\in\NN$, then
\begin{align*}
&\frac{1}{a_t}\max_{1\leq j\leq t}(a_j-a_{j-1})\geq\frac{1}{a_t}(a_t-a_{t-1})\\
&=\frac{1}{\left\lceil\frac{(4n-1)!}{(1-d)^j}\right\rceil}\left(\left\lceil\frac{(4n-1)!}{(1-d)^j}\right\rceil-\left\lceil\frac{(4n-1)!}{(1-d)^{j-1}}\right\rceil\right)\to 1-(1-d)=d\ \textrm{as}\ n\to\infty.
\end{align*}
Moreover, if $a_t=\max D_n$, then $j=\left\lfloor -\frac{\log 4n}{\log (1-d)}\right\rfloor\to\infty$ as $n\to\infty$. As a result, we have
\begin{align*}
&a_t-a_{t-1}=\left\lceil\frac{(4n-1)!}{(1-d)^j}\right\rceil-\left\lceil\frac{(4n-1)!}{(1-d)^{j-1}}\right\rceil >\frac{(4n-1)!}{(1-d)^j}-\frac{(4n-1)!}{(1-d)^{j-1}}-1=\frac{d}{(1-d)^j}(4n-1)!-1\\
&>\max\left\{(4n-1)!,\left\lceil\frac{(4n-1)!}{(1-d)^s}\right\rceil-\left\lceil\frac{(4n-1)!}{(1-d)^{s-1}}\right\rceil :\ s\in\{1,\ldots ,j-1\}\right\}
\end{align*}
for sufficiently large $n$. Hence, $\frac{1}{a_t}\max_{1\leq j\leq t}(a_j-a_{j-1})=\frac{1}{a_t}(a_t-a_{t-1})\to d$, $n\to\infty$.
\end{itemize}
Summing up, $\underline{D}(A)=\liminf_{t\to\infty}\frac{1}{a_t}\max_{1\leq j\leq t}(a_j-a_{j-1})=d$.

We are left with the computation of $\lim_{t\to\infty}\frac{log A(t)}{\log t}$. We do this by estimating the lower and upper limit from below and above, respectively. On one hand,
\begin{align*}
&\liminf_{t\to\infty}\frac{\log A(t)}{\log t}=\lim_{n\to\infty}\frac{\log A(d\cdot (4n+1)!)}{\log (d\cdot (4n+1)!)}\geq\lim_{n\to\infty}\frac{\log(\# B_n)}{\log (d\cdot (4n+1)!)}\\
&=\lim_{n\to\infty}\frac{\log\left(\left(\frac{2d}{2-d}\cdot (4n-3)!\right)^\lambda -(d\cdot (4n-3)!)^\lambda\right)}{\log d+(4n+1)\log(4n+1)}\\
&=\lim_{n\to\infty}\frac{\log\left(\left(\frac{2d}{2-d}\right)^\lambda -d^\lambda\right)+\lambda\cdot (4n-3)\log(4n-3)}{\log d+(4n+1)\log(4n+1)}=\lambda,
\end{align*}
where we used the fact that $\frac{\log n!}{n\log n}\to 1$ as $n\to\infty$. On the other hand,
\begin{align*}
&\limsup_{t\to\infty}\frac{\log A(t)}{\log t}=\lim_{n\to\infty}\frac{\log A\left(\frac{2d}{2-d}\cdot (4n-3)!\right)}{\log \left(\frac{2d}{2-d}\cdot (4n-3)!\right)}\leq\lim_{n\to\infty}\frac{\log\left(\frac{2d}{2-d}\cdot (4n-3)!\right)^\lambda}{\log\left(\frac{2d}{2-d}\cdot (4n-3)!\right)}=\lambda,
\end{align*}
where we used the fact that
\begin{align*}
\# B_n<&\#\left(\left\{\left\lceil j^{\frac{1}{\lambda}}\right\rceil :\ j\in\NN\right\}\cap((4n-4)!,(4n-3)!]\right),\\
\# C_n<&\#\left(\left\{\left\lceil j^{\frac{1}{\lambda}}\right\rceil :\ j\in\NN\right\}\cap((4n-3)!,(4n-1)!]\right),\\
\# D_n<&\#\left(\left\{\left\lceil j^{\frac{1}{\lambda}}\right\rceil :\ j\in\NN\right\}\cap((4n-1)!,(4n)!]\right)
\end{align*}
for sufficiently large $n$. Thus we showed that $\lim_{t\to\infty}\frac{log A(t)}{\log t}=\lambda$.

Now we consider the case of $k=1$ and $\lambda\in (0,1]$. Define
$$d_n=\begin{cases}
d&\mbox{ if }\ d\in (0,1),\\
\left(1-\frac{1}{n}\right)^{\frac{1}{\lambda}}&\mbox{ if }\ d=1,
\end{cases}\quad n\in\NN,$$
and
$$A_n=\left\{\lceil d_n\cdot n!\rceil , n!, \left\lceil j^{\frac{1}{\lambda}}\right\rceil :j\in\NN\cap [(d_n\cdot n!)^{\lambda},(n!)^{\lambda}]\right\}.$$
Then we set
$$A=\bigcup_{n\in\NN}A_n.$$
We see that $R(A)$ is not dense in $(0,1)$. Indeed, if $a<b$ are two elements of $A$ and $a,b\in A_n$ for some $n\in\NN$, then $\frac{a}{b}\geq d_n$. If $a\in A_m$ and $b\in A_n$ for $m<n$, then $\frac{a}{b}\leq\frac{1}{nd_n}$.

Now, we compute the dispersion of $A$. We begin with a remark that among all the elements of $A$ not exceeding $n!$ the greatest difference between two consecutive elements is $\lceil d_n\cdot n!\rceil-(n-1)!$ for sufficiently large $n\in\NN$. Indeed, we have
$$\lceil d_n\cdot n!\rceil-(n-1)!\geq d_n\cdot n!-(n-1)!=(d_n\cdot n-1)(n-1)!>(n-1)!$$
for $n>\frac{2}{d}$ if $d\in (0,1)$ and $n>2^{1+\frac{1}{\lambda}}$ if $d=1$. Moreover,
\begin{align*}
&\left\lceil j^{\frac{1}{\lambda}}\right\rceil-\left\lceil (j-1)^{\frac{1}{\lambda}}\right\rceil<j^{\frac{1}{\lambda}}+1-(j-1)^{\frac{1}{\lambda}}\leq n!+1-\left((n!)^{\lambda}-1\right)^{\frac{1}{\lambda}}=n!(1-(1-(n!)^{-\lambda})^{\frac{1}{\lambda}})+1\\
&\leq n!\left(1-\left(1-\frac{1}{\lambda}(n!)^{-\lambda}\right)\right)+1=n!\cdot \frac{1}{\lambda}(n!)^{-\lambda}+1=\frac{1}{\lambda}(n!)^{1-\lambda}+1<(n-1)!
\end{align*}
for sufficiently large $n\in\NN$, where the second inequality in the first line follows from the assumption that $j\leq (n!)^{\lambda}$, the first inequality in the second line comes from Bernoulli inequality and the last inequality holds for $n\gg 0$ as $\lambda>0$. As a result,
$$\underline{D}(A)=\lim_{n\to\infty}\frac{d_n\cdot n!-(n-1)!}{n!}=\lim_{n\to\infty}\left(d_n-\frac{1}{n}\right)=d.$$

We compute $\lim_{t\to\infty}\frac{\log A(t)}{\log t}$ by estimating the lower and upper limits from below and above, respectively.
\begin{align*}
&\liminf_{t\to\infty}\frac{\log A(t)}{\log t}=\lim_{n\to\infty}\frac{\log A(d_{n+1}\cdot (n+1)!)}{\log (d_{n+1}\cdot (n+1)!)}\geq\lim_{n\to\infty}\frac{\log (\# A_n)}{\log d_{n+1} + \log (n+1)!}\\
&=\lim_{n\to\infty}\frac{\log ((n!)^{\lambda}-(d_n\cdot n!)^{\lambda})}{\log d_{n+1} + (n+1)\log (n+1)}=\lim_{n\to\infty}\frac{\log(1-(d_n)^{\lambda})+\lambda\log n!}{\log d_{n+1} + (n+1)\log (n+1)}\\
&=\lim_{n\to\infty}\frac{\log(1-(d_n)^{\lambda})+\lambda\cdot n\log n}{\log d_{n+1} + (n+1)\log (n+1)}=\lambda.
\end{align*}

Next, we estimate the upper limit from above with the use of the fact that $A\subset\left\{\lceil j\rceil^{\frac{1}{\lambda}}:\ j\in\NN\right\}$.
\begin{align*}
\limsup_{t\to\infty}\frac{\log A(t)}{\log t}=\lim_{n\to\infty}\frac{\log A(n!)}{\log n!}\leq\lim_{n\to\infty}\frac{\log (n!)^\lambda}{\log n!}=\lambda
\end{align*}
Summing up our estimations, we get
$$\lambda\leq\liminf_{t\to\infty}\frac{\log A(t)}{\log t}\leq\limsup_{t\to\infty}\frac{\log A(t)}{\log t}\leq\lambda,$$
which means that $\lim_{t\to\infty}\frac{\log A(t)}{\log t}$ exists and is equal to $\lambda$.

We are left with the case of $k=1$ and $\lambda=0$. It is quite easy as we may take
$$A=\begin{cases}
\{\lceil (1-d)^{-k}\rceil:k\in\NN\}&\mbox{ for }d\in (0,1),\\
\{k!:k\in\NN\}&\mbox{ for }d=1.\\
\end{cases}$$
as a required set. The facts that $R(A)$ is not dense in $\RR_+$, $\underline{D}(A)=d$ and $\lim_{t\to\infty}\frac{\log A(t)}{\log t}=0$ are classical ones, so we leave their proof to the reader.
\end{proof}

\begin{Remark}
{\rm Replacing some of the sets $B_n$ or $A_n$ by $$\left\{\left\lceil j^{\frac{1}{\kappa}}\right\rceil :\ j\in\NN\right\}\cap \left(\max\{(4n-4)!,d\cdot (4n-3)!\},\frac{2d}{2-d}\cdot (4n-3)!\right],$$ where $\kappa<\lambda$ (in case of $\kappa=0$ we only delete some sets $B_n$ or $A_n$), we can even construct a set $A\subset\NN$ with $R^k(A)$ dense in $\RR_+^{k-1}$, $R^{k+1}(A)$ not dense in $\RR_+^k$, dispersion equal to $d\in \left(0,\frac{1}{k}\right]$, and $\liminf_{t\to\infty}\frac{\log A(t)}{\log t}=\kappa$ and $\limsup_{t\to\infty}\frac{\log A(t)}{\log t}=\lambda$. We decided not to consider distinct values of $\liminf_{t\to\infty}\frac{\log A(t)}{\log t}$ and $\limsup_{t\to\infty}\frac{\log A(t)}{\log t}$ in Theorem \ref{arbitrarydispersion} as it would complicate its proof that is tricky even in the present form.}
\end{Remark}

\begin{Remark}\label{conexpnotimpRVq}
{\rm With the use of Theorem \ref{arbitrarydispersion} we are able to conclude that the condition \linebreak$\lim_{n\to\infty}\frac{\log A(n)}{\log n}=q$ for arbitrarily fixed $q\in (0,1]$ does not imply that $A(x)\in\RV_q$. By Theorem \ref{arbitrarydispersion} we know that there exists a subset $A$ of $\NN$ such that $\lim_{n\to\infty}\frac{\log A(n)}{\log n}=q$ and $\underline{D}(A)>0$. Hence, $A$ is not $(N)$-dense and, as a result, $A\not\in\RV_q$.}
\end{Remark}

Theorem \ref{arbitrarydispersion} concerns the case of positive dispersion. Our last theorem compliments it as it shows that there are $(N)$-dense sets $A\subset\NN$ (and hence the sets with dispersion zero) with arbitrary value of $\lim_{n\to\infty}\frac{\log A(n)}{\log n}$, and hence the exponent of convergence.

\begin{Theorem}\label{convexpdisp}
For each $\lambda\in [0,1]$ there exists an $(N)$-dense set $A\subset\NN$ such that $\lim_{n\to\infty}\frac{\log A(n)}{\log n}=\lambda$ (in particular $A$ has convergence exponent equal to $\lambda$).
\end{Theorem}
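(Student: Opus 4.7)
By the equivalence of conditions ix) and x) in Theorem \ref{Ndense}, a set $A=\{a_1<a_2<\cdots\}\subset\NN$ is $(N)$-dense if and only if $\lim_{n\to\infty}a_{n+1}/a_n=1$. Hence it suffices to exhibit, for each $\lambda\in[0,1]$, a strictly increasing sequence $A\subset\NN$ satisfying both
\[
\lim_{n\to\infty}\frac{a_{n+1}}{a_n}=1\qquad\text{and}\qquad \lim_{n\to\infty}\frac{\log A(n)}{\log n}=\lambda.
\]
The parenthetical statement about the exponent of convergence then follows at once from (A15), since $\lambda(A)=\limsup_{n\to\infty}\log A(n)/\log n$.

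For $\lambda\in(0,1]$ I would take $a_n:=\lceil n^{1/\lambda}\rceil$. When $\lambda=1$ this is just $A=\NN$; when $\lambda<1$, the increments satisfy $(n+1)^{1/\lambda}-n^{1/\lambda}\to\infty$, so the sequence is strictly increasing from some index onward, and a finite modification of initial terms has no effect on the asymptotics. A direct computation gives $a_{n+1}/a_n\sim(1+1/n)^{1/\lambda}\to 1$, while $A(t)=\lfloor t^\lambda\rfloor+O(1)$, and hence $\log A(t)/\log t\to\lambda$.

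For $\lambda=0$ I would take $a_n:=\lceil e^{\sqrt{n}}\rceil$. The differences $e^{\sqrt{n+1}}-e^{\sqrt{n}}=e^{\sqrt n}\bigl(e^{1/(\sqrt{n+1}+\sqrt{n})}-1\bigr)$ tend to infinity, so after removing at most finitely many initial repetitions the sequence is strictly increasing. The ratio is $a_{n+1}/a_n\sim e^{1/(\sqrt{n+1}+\sqrt{n})}\to 1$, giving $(N)$-denseness, and from $a_n\sim e^{\sqrt n}$ one reads off $A(t)\sim(\log t)^2$, so that $\log A(t)/\log t\sim 2\log\log t/\log t\to 0$.

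The only genuinely delicate case is $\lambda=0$: one must produce a set sparser than every power $t^{\varepsilon}$ while still forcing $a_{n+1}/a_n\to 1$. The choice $\psi(n)=\sqrt n$ is tailored precisely for this balance, being superlogarithmic (so $A(t)$ is sub-polynomially small) yet with vanishing successive differences (so ratios tend to $1$); any $\psi$ with $\psi(n)/\log n\to\infty$ and $\psi(n+1)-\psi(n)\to 0$ would substitute equally well. The remaining bookkeeping (strict monotonicity, verifying the counting-function asymptotics, and passing from integer arguments to real ones in $A(t)$) is routine.
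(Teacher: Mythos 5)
Your argument is correct, and for $\lambda\in(0,1]$ it coincides with the paper's proof: both take $A=\{\lceil j^{1/\lambda}\rceil: j\in\NN\}$, verify $a_{n+1}/a_n\to 1$, invoke the equivalence ix)$\Leftrightarrow$x) of Theorem \ref{Ndense}, and read off $\log A(t)/\log t\to\lambda$ from $A(t)=\lfloor t^\lambda\rfloor+O(1)$. The genuine divergence is in the case $\lambda=0$. The paper builds $A=\bigcup_n A_n$ with $A_n=\{j^n: n^{n-1}\le j\le (n+1)^{n+1}\}$, i.e.\ blocks of consecutive $n$-th powers glued so that $\max A_n=\min A_{n+1}$; it then establishes $\lambda(A)=0$ by proving $\sum_{a\in A}a^{-\alpha}<\infty$ for every $\alpha>0$ via the root test, and separately checks $a_{t+1}/a_t\le(1+n^{1-n})^n\to 1$. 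You instead take $a_n=\lceil e^{\sqrt n}\rceil$, for which $A(t)\sim(\log t)^2$ gives $\log A(t)/\log t\to 0$ directly and $a_{n+1}/a_n\sim e^{1/(\sqrt{n+1}+\sqrt n)}\to 1$ is immediate; your observation that the gaps $e^{\sqrt n}(e^{1/(\sqrt{n+1}+\sqrt n)}-1)\to\infty$ correctly disposes of possible repetitions after rounding. Your construction is shorter and more transparent, and it yields the slightly stronger conclusion $A(t)\sim(\log t)^2$ explicitly rather than only $\lambda(A)=0$; the paper's block construction is bulkier but is of a shape that the authors reuse elsewhere (cf.\ Remarks \ref{convexpNdense} and \ref{Ndensenotimpliminf}, which exploit the specific power-block structure of their $\lambda=0$ example). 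Either route is a complete proof once the routine monotonicity and counting estimates you defer are written out.
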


\begin{proof}
If $\lambda>0$, then we put
$$A=\left\{\lceil j\rceil^{\frac{1}{\lambda}}:\ j\in\NN\right\}.$$
Then, of course,
$$\lim_{t\to\infty}\frac{\log A(t)}{\log t}=\lim_{t\to\infty}\frac{\log t^\lambda}{\log t}=\lambda$$
and
$$\lim_{j\to\infty}\frac{\left\lceil (j+1)^{\frac{1}{\lambda}}\right\rceil}{\left\lceil j^{\frac{1}{\lambda}}\right\rceil}=\lim_{j\to\infty}\left(\frac{j+1}{j}\right)^{\frac{1}{\lambda}}=1,$$
so by Theorem \ref{Ndense} we know that $A$ is $(N)$-dense.

We are left with the case of $\lambda=0$. Define 
$$A_n=\{j^n:j\in\{n^{n-1},n^{n-1}+1,\ldots,(n+1)^{n+1}-1,(n+1)^{n+1}\}\},\quad n\in\NN$$ and $$A=\bigcup_{n\in\NN}A_n.$$
Note that $\max A_{n}=\min A_{n+1}=(n+1)^{n(n+1)}$.

We check that $\sum_{a\in A}\frac{1}{a^{\alpha}}<\infty$ for each $\alpha>0$.
$$\sum_{a\in A}\frac{1}{a^{\alpha}}=\sum_{n=1}^{\infty}\sum_{j=n^{n-1}}^{(n+1)^{n+1}}\frac{1}{j^{n\alpha}}<\sum_{n=1}^{\infty}\frac{(n+1)^{n+1}}{n^{(n-1)n\alpha}}<\infty,$$
where the first inequality follows from the facts that
$$\#\{n^{n-1},n^{n-1},\ldots,(n+1)^{n+1}-1,(n+1)^{n+1}\}<(n+1)^{n+1}$$
and 
$$\frac{1}{j^n}\leq\frac{1}{n^{(n-1)n}},\quad j\in\{n^{n-1},n^{n-1},\ldots,(n+1)^{n+1}-1,(n+1)^{n+1}\}.$$ 
The convergence for the last series comes from the root test as
$$0<\sqrt[n]{\frac{(n+1)^{n+1}}{n^{(n-1)n\alpha}}}=\frac{(n+1)^{1+\frac{1}{n}}}{n^{(n-1)\alpha}}<\frac{n^4}{n^{(n-1)\alpha}}\to 0\ \textrm{as}\ n\to\infty,$$
where the last inequality holds for $n\geq 2$ because then we have $1+\frac{1}{n}<2$ and $n+1<n^2$.

Now we shall prove that $\lim_{t\to\infty}\frac{a_{t+1}}{a_t}=1$, where $A=\{a_1<a_2<a_3<\ldots\}$. Then, by Theorem \ref{Ndense} we will know that $A$ is $(N)$-dense. Let us take two consecutive elements $a_t, a_{t+1}\in A$. Then, there exists a $n=n(t)\in\NN$ such that $a_t, a_{t+1}\in A_k$. This means that $a_t=j^n$ and $a_{t+1}=(j+1)^n$ for some $j\in\{n^{n-1},n^{n-1},\ldots,(n+1)^{n+1}-1,(n+1)^{n+1}\}$. Then,
$$1<\frac{a_{t+1}}{a_t}=\frac{(j+1)^n}{j^n}=\left(1+\frac{1}{j}\right)^n\leq\left(1+\frac{1}{n^{n-1}}\right)^n=\left(\left(1+\frac{1}{n^{n-1}}\right)^{n^{n-1}}\right)^{n^{2-n}}\to 1\ \textrm{as}\ n\to\infty,$$
where the inequality follows from the assumption $j\geq n^{n-1}$ and the convergence holds because $\left(1+\frac{1}{n^{n-1}}\right)^{n^{n-1}}\to e$ and $n^{2-n}\to 0$ as $n\to\infty$. Thus, $\lim_{t\to\infty}\frac{a_{t+1}}{a_t}=1$, which ends the proof.
\end{proof}

\begin{Remark}\label{convexpNdense}
{\rm Intersecting the sets $A$ constructed in the above proof with the set $\bigcup_{k=1}^{\infty}[(2k)!,(2k+1)!]$ we see that for each $\lambda\in [0,1]$ there exists a set $B\in\NN$ such that $\lim_{t\to\infty}\frac{\log B(t)}{\log t}=\lambda$, $\underline{D}(B)=0$ but $B$ is not $(N)$-dense.}
\end{Remark}

\begin{Remark}
{\rm The set $A$ from Example \ref{nolim} in the case of $p>0$ is $(N)$-dense as the quotient of two consecutive elements of $A$ tends to $1$. Hence there exists an $(N)$-dense set $A\subset\NN$ such that $\liminf_{t\to\infty}\frac{\log A(t)}{\log t}=p$ and $\limsup_{t\to\infty}\frac{\log A(t)}{\log t}=q$, where $0<p<q\leq 1$ are arbitrary. Meanwhile the set
$A'=\bigcup_{k=1}^{\infty}(A_{4k-1}\cup A_{4k}),$
where $A_k$ are as in the example, has dispersion equal to $0$ but is not $(N)$-dense, and $\liminf_{t\to\infty}\frac{\log A'(t)}{\log t}=p$ and $\limsup_{t\to\infty}\frac{\log A'(t)}{\log t}=q$.

It is also possible to construct an $(N)$-dense set $B\in\NN$ such that $\liminf_{t\to\infty}\frac{\log B(t)}{\log t}=0$ and $\lim_{t\to\infty}\frac{\log B(t)}{\log t}=q$, where $q\in (0,1]$ is arbitrary. Namely,
$$B=A\cup\bigcup_{k=1}^{\infty}\left\{\left\lceil j^{\frac{1}{q}}\right\rceil :\ b_k\leq j^{\frac{1}{q}}\leq c_k\right\},$$
where $A$ is the set from the proof of Theorem \ref{convexpdisp} in the case of $\lambda=0$ and $(b_k)_{k\in\NN}$, $(c_k)_{k\in\NN}$ are appropriately chosen sequences increasing to infinity. Of course, the set
$$B'=\bigcup_{k=1}^{\infty}\left\{\left\lceil j^{\frac{1}{q}}\right\rceil :\ b_k\leq j^{\frac{1}{q}}\leq c_k\right\}$$
has dispersion $0$, is not $(N)$-dense, and $\liminf_{t\to\infty}\frac{\log B'(t)}{\log t}=0$ and $\lim_{t\to\infty}\frac{\log B'(t)}{\log t}=q$.}
\end{Remark}

\begin{Remark}\label{Ndensenotimpliminf}
{\rm The set $A$ constructed for $\lambda=d=0$ in the proof of Theorem \ref{convexpdisp} shows that the condition $\forall c>1:\liminf_{t\to\infty}\frac{A(ct)}{A(t)}>1$ is essentially stronger than $\forall c>1:A(ct)>A(t)$ for $t\gg 0$. Indeed, let $c>1$ be arbitrary. Let $t\in\RR_+$. Then, there exists a unique $k=k(t)$ such that $t\in [k^{k-1},(k+1)^{k+1})$. Thus, we have
$$A(t)\geq\#\{j\in\NN:j^k\leq t\}\leq\sqrt[k]{t}$$
and
$$A(ct)-A(t)\leq\#\{j\in\NN:j^k\in (t,ct]\}=\#(\NN\cap (\sqrt[k]{t},\sqrt[k]{ct}])\leq\sqrt[k]{ct}-\sqrt[k]{t}.$$
Consequently,
$$\lim_{t\to\infty}\frac{A(ct)}{A(t)}=\lim_{t\to\infty}\Big(1+\frac{A(ct)-A(t)}{A(t)}\Big)\leq\lim_{t\to\infty}\Big(1+\frac{\sqrt[k]{ct}-\sqrt[k]{t}}{\sqrt[k]{t}}\Big)=\lim_{t\to\infty}\sqrt[k(t)]{c}=1,$$
as $\lim_{t\to\infty}k(t)=\infty$. On the other hand, $A$ is $(N)$-dense, as we noted in Remark \ref{convexpNdense}. Hence, by Theorem \ref{Ndense} we know that $A(ct)>A(t)$ for each $c>1$ and $t\gg 0$.

Moreover, for any nonempty set $B\subset A$ and number $c>1$ we have $\liminf_{t\to\infty}\frac{B(ct)}{B(t)}=1$. Assume the contrary. Let $B\subset A$, $c,g,h,t_0>1$ be such that $g>h$, $B(t_0)>0$, $\frac{B(ct)}{B(t)}\geq g$ and $\frac{A(ct)}{A(t)}\leq h$ for $t\geq t_0$. Then, $$g^nB(t_0)\leq B(c^nt_0)\leq A(c^nt_0)\leq h^nA(t_0), \quad n\in\NN.$$ On the other hand, $g^nB(t_0)>h^nA(t_0)$ for sufficiently large $n\in\NN$ as $g>h$ and $B(t_0)>0$ - a contradiction. Thus, $\liminf_{t\to\infty}\frac{B(ct)}{B(t)}=1$ for each $\emptyset\neq B\subset A$ and $q>1$.}
\end{Remark}

\section{Summary of results}

Firstly, we show the relationships between the implications (A16), (A17), (A18) and the properties (statements) introduced or proved in this paper.
\\Further let $A=\{a_1<a_2<\cdots\}\subset\NN$. Then we have first chain of implications as $\lambda(A)>0$.

\begin{align*}
& A\ \textrm{is regul. seq. with exp.}\ 0<q\leq 1\ \substack{{(A18)}\\ \imp\\ \Longleftarrow\\ {(A14)-i)}}\ 
\lim_{n\to\infty} \frac1{na_n} \sum_{i=1}^{n} {a_i}=\frac q{q+1}\ {\eq_{(A6)}} \ A\in\Uq\ \\
& {\eq_{Corol. \ref{C2}}}\ A(x)\in\RV_q\ \substack{{(A19)}\\ \imp\\ \not\Longleftarrow\\ Rem. \ref{conexpnotimpRVq}}\ \lim_{n \to \infty}\frac{\log A(n)}{\log n}=q\ {\eq_{(A15)}}\ \lim_{n \to \infty}\frac{\log n}{\log a_n}=q\ \substack{{trivially}\\ \imp\\ \not\Longleftarrow\\ Ex. \ref{nolim}}\ \lambda(A)=q \\ 
& {\eq_{def.}} \ A\in\Ieq\setminus\Iq\ \substack{{(A10)}\\ \imp\\ \not\Longleftarrow\\ trivially}\ A\in\Ixq.\ 
\end{align*}

On the other hand we have second chain of implications, where there is $\lambda(A)=0$.

\begin{align*}
& \lim_{n\to\infty} \frac1{na_n} \sum_{i=1}^{n} {a_i}=0\ {\eq_{(A4)}}\ 
A\in\Uc\ {\eq_{Corol. \ref{C2}}}\ A(x)\in\RV_0\ \substack{{Corol. \ref{C4}}\\ \imp\\ \not\Longleftarrow\\ Rem. \ref{liminfnotimpRVq}}\ \lambda(A)=0 \\
& {\eq_{def.}} \ A\in\I0\ {\eq_{(A10)}}\ A\in\Ic.\ 
\end{align*}

The next chain of statements is continuation of the first one and contains e.g. extension of (A20), equivalent conditions for $(N)$-denseness and characterization of condition $\underline D(A)=0$. Every condition but the last three in the first from above chains implies $(N)$-denseness of $A$.

\begin{align*}
& \exists q\in (0,1]: A(x)\in\RV_q\ {\eq_{Corol. \ref{C3}}}\ \forall\  c>1:\ \lim_{t\to\infty}\frac{A(ct)}{A(t)}>1\ \substack{{trivially}\\ \imp\\ \not\Longleftarrow\\ Rem. \ref{liminfnotimplim}}\ \forall\  c>1:\ \liminf_{t\to\infty}\frac{A(ct)}{A(t)}>1\ \\
& \substack{{trivially}\\ \imp\\ \not\Longleftarrow\\ Rem. \ref{Ndensenotimpliminf}}\ \forall\  c>1:\ A(ct)>A(t)\ \textrm{for}\ t\gg 0\ \\
& {\eq_{Thm \ref{Ndense}}}\ \forall\ k\geq 2\ \forall\ B\subset\NN\ \textrm{infinite:}\ R^k(A;B)\ \textrm{is dense in}\ \RR_+^{k-1}\ \\
& {\eq_{Thm \ref{Ndense}}}\ \forall\ k\geq 2\ \forall\ B\subset\NN\ \textrm{infinite:}\ D^k(A;B)\ \textrm{is dense in}\ \SS_+^{k-1}\ \\
& {\eq_{Thm \ref{Ndense}}}\ \exists\ k\geq 2\ \forall\ B\subset\NN\ \textrm{infinite:}\ R^k(A;B)\ \textrm{is dense in}\ \RR_+^{k-1}\ \\
& {\eq_{Thm \ref{Ndense}}}\ \exists\ k\geq 2\ \forall\ B\subset\NN\ \textrm{infinite:}\ D^k(A;B)\ \textrm{is dense in}\ \SS_+^{k-1}\ \\
& {\eq_{Thm \ref{Ndense}}}\ \forall\ B\subset\NN\ \textrm{infinite}\ \exists\ k\geq 2:\ R^k(A;B)\ \textrm{is dense in}\ \RR_+^{k-1}\ \\
& {\eq_{Thm \ref{Ndense}}}\ \forall\ B\subset\NN\ \textrm{infinite}\ \exists\ k\geq 2:\ D^k(A;B)\ \textrm{is dense in}\ \SS_+^{k-1}\ \\
& {\eq_{Thm \ref{Ndense}}}\ A\ \textrm{is}\ (N)-\textrm{dense}\ {\eq_{Thm \ref{Ndense}}}\ \lim_{n \to \infty}\frac{a_{n+1}}{a_n}=1\ \\
& {\eq_{Thm \ref{Ndense}}}\ \lim_{n \to \infty}\frac{1}{a_n}\max\{a_1,a_{i+1}-a_i: i\in\{1,\ldots,n-1\}\}=0 \substack{{trivially}\\ \imp\\ \not\Longleftarrow\\ Rem. \ref{D0notimpNdense}}\ \underline D (A)=0\ \\
& {\eq_{Thm \ref{D0}}}\ \forall\ k\geq 2:\ R^k(A)\ \textrm{is dense in}\ \RR^{k-1}_+\ {\eq_{Lem. \ref{directionratio}}}\ \forall\ k\geq 2:\ D^k(A)\ \textrm{is dense in}\ S^{k-1}_+\ \\
& \substack{{trivially}\\ \imp\\ \not\Longleftarrow\\ Thm \ref{arbitrarydispersion}}\ \exists\ k\geq 2:\ D^k(A)\ \textrm{is dense in}\ S^{k-1}_+\ {\eq_{Lem. \ref{directionratio}}}\ \exists\ k\geq 2:\ R^k(A)\ \textrm{is dense in}\ \RR^{k-1}_+\ \\
& \substack{{see\ proof\ of\ vi)\imp viii)\ in\ Thm 3}\\ \imp\\ \not\Longleftarrow\\ Thm \ref{arbitrarydispersion}}\ A\ \textrm{is}\ (R)\textrm{-dense}.\
\end{align*}

The last chain of implications complements the third one as it gives a connection between denseness of sets $R^l(A)$ in $\RR_+^{l-1}$ and $R^k(A)$ in $\RR_+^{k-1}$ for $2\leq k\leq l$ and a necessary condition for denseness of $R^k(A)$ in $\RR_+^{k-1}$ in terms of dispersion (in fact, this necessary condition is a natural generalization of (A22)).

\begin{align*}
R^l(A)\ \textrm{is dense in}\ \RR^{l-1}_+\ \substack{{Lem. \ref{directionratio}},\ (A23)\\ \imp\\ \not\Longleftarrow\\ (A23)}\ R^k(A)\ \textrm{is dense in}\ \RR^{k-1}_+\ \substack{{Thm \ref{1/k}}\\ \imp\\ \not\Longleftarrow\\ Thm \ref{arbitrarydispersion}}\ \underline{D}(A)\leq\frac{1}{k}.\ 
\end{align*}

The implications in the above chains and these ones derived from them by transitivity law are the only valid implications between conditions and their conjunctions presented in the chains.

\end{document}